\def\co{\colon\thinspace}
\def\ond{\hspace{-2pt}\stackrel{\circ}{\nu}\hspace{-3pt}}
\DeclareMathAlphabet{\mathsfsl}{OT1}{cmss}{m}{sl}
\def\spin{\mathrm{Spin}^c}
\def\relspin{\underline{\mathrm{Spin}^c}}
\newtheorem{thm}{Theorem}[section]
\newtheorem{lem}[thm]{Lemma}
\newtheorem{conj}[thm]{Conjecture}
\newtheorem{prop}[thm]{Proposition}
\newtheorem*{thm*}{Theorem}
\theoremstyle{definition}
\newtheorem{defn}[thm]{Definition}
\newtheorem{rem}[thm]{Remark}
\newtheorem{construction}[thm]{Construction}
\begin{document}

\title{Thurston norm and cosmetic surgeries}

\author{{Yi NI}\\{\normalsize Department of Mathematics, Caltech, MC 253-37}\\
{\normalsize 1200 E California Blvd, Pasadena, CA
91125}\\{\small\it Emai\/l\/:\quad\rm yni@caltech.edu}}

\date{}
\maketitle

\begin{abstract}
Two Dehn surgeries on a knot are called cosmetic if they yield
homeomorphic manifolds. For a null-homologous knot with certain
conditions on the Thurston norm of the ambient manifold, if the
knot admits cosmetic surgeries, then the surgery coefficients are
equal up to sign.
\end{abstract}

\section{Introduction}

Heegaard Floer homology is a powerful theory introduced by
Ozsv\'ath and Szab\'o \cite{OSzAnn1}. One important aspect of
Heegaard Floer homology is that it behaves well under Dehn
surgeries. In fact, if one knows about the knot Floer complex of a
knot, then one can compute the Heegaard Floer homology of any
surgery on the knot \cite{OSzKnot,RasThesis,OSzRatSurg}. This
makes Heegaard Floer homology very useful in the study of Dehn
surgery.

In this paper, we will use Heegaard Floer homology to study
cosmetic surgeries. We first recall the definition of cosmetic
surgeries.

\begin{defn}
If two Dehn surgeries on a knot yield homeomorphic manifolds, then
these two surgeries are {\it cosmetic}.
\end{defn}

Cosmetic surgeries are very rare. More precisely, one has the
following Cosmetic Surgery Conjecture.

\begin{conj}{\rm\cite[Problem~1.81]{Kirby}}
Suppose $K$ is a knot in a closed manifold $Y$. If the complement
of $K$ is irreducible and is not the solid torus, then any two
surgeries on $K$ do not yield manifolds which are homeomorphic via
an orientation preserving homeomorphism.
\end{conj}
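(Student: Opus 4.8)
The plan is to combine a reduction to the knot complement with the full strength of Heegaard Floer homology. Suppose $h\co Y_\alpha(K)\to Y_\beta(K)$ is an orientation-preserving homeomorphism between surgeries along distinct slopes $\alpha\neq\beta$, where $M=Y\setminus\nu(K)$ is irreducible and not a solid torus. I would first use homology: an orientation-preserving homeomorphism matches $H_1$ together with its linking form, which already constrains the pair $(\alpha,\beta)$ to an explicit finite list. Next I would try to promote $h$ to a self-homeomorphism of $M$. By geometrization and the JSJ decomposition, the core of the filling solid torus is a canonical submanifold of $Y_\alpha(K)$ outside a short list of exceptional fillings (those yielding reducible, lens-space, or small Seifert-fibered pieces), so away from these $h$ carries core to core, restricts to a homeomorphism $M\to M$, and sends $\alpha$ to $\beta$. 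Then $\alpha=\beta$ unless $M$ admits a nontrivial self-homeomorphism permuting slopes, and the whole problem localizes to (i) the exceptional fillings and (ii) the symmetries of $M$.

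For (i) I would invoke the rational surgery formula \cite{OSzRatSurg}: every Heegaard Floer invariant of $Y_\alpha(K)$ is computable from the knot Floer complex of $(Y,K)$. From this I extract the correction terms of $Y_\alpha(K)$, orientation-sensitive rationals indexed by $\spin$ structures; equating the $d$-invariant sets of $Y_\alpha(K)$ and $Y_\beta(K)$ across $h$ collapses most of the candidate list. Simultaneously, knot Floer homology detects the Thurston norm \cite{OSzKnot}, so the norm data of $M$ and of the two fillings is an isomorphism invariant I can read off directly; the Thurston-norm hypotheses in force are exactly what forbid the fibered and small-Seifert degeneracies, disposing of the exceptional fillings in (i).

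The pairs that survive homology and the $d$-invariant comparison should then be separated by the Casson–Walker–Lescop invariant, whose surgery formula is a controlled function of the slope and of finitely many invariants of $(Y,K)$. Since this invariant is preserved by orientation-preserving homeomorphisms yet genuinely varies with the slope, it eliminates the remaining same-orientation candidates and forces $\alpha=\beta$ in every case where $M$ carries no orientation-reversing self-homeomorphism.

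The genuinely hard part, which I expect to be the main obstacle, is the mirror pair $\{\alpha,-\alpha\}$: all of the numerical invariants above agree only up to the chirality of $M$, so by themselves they cannot separate $\alpha$ from $-\alpha$ when $M$ is amphichiral or nearly so. Overcoming this requires either showing that a homeomorphism between the $\alpha$- and $(-\alpha)$-fillings must descend to an orientation-reversing self-homeomorphism of $M$ obstructed by an intrinsic asymmetry of the knot Floer complex, or upgrading from numerical correction terms to the full $\spin$-graded Heegaard Floer modules as a finer isomorphism invariant. In the stated generality — arbitrary $Y$, knots that need not be null-homologous, and complements admitting symmetries — such refinements are not presently strong enough, so this sign ambiguity is the precise point at which the argument stalls, which is why only the weaker ``equal up to sign'' conclusion is currently within reach and the conjecture as worded remains the limiting difficulty.
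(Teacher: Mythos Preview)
The statement you are attempting is a \emph{conjecture}: it is quoted from Kirby's problem list and the paper does not prove it. It is stated only as motivation, and the paper's actual results (Theorems~\ref{thm:NormCos} and~\ref{thm:ZeroNormCos}) establish the far weaker conclusion $r=\pm s$ under additional Thurston-norm hypotheses on a null-homologous knot. There is thus no proof in the paper to compare your proposal against, and the conjecture in the generality stated remains open.

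Your proposal is accordingly not a proof but a strategy sketch, and you yourself concede that it stalls at the $\alpha\leftrightarrow-\alpha$ ambiguity. Several earlier steps are also genuine gaps rather than reductions. The assertion that the core of the filling torus is a ``canonical submanifold'' of $Y_\alpha(K)$ away from a short list of exceptional fillings is not a theorem in this generality; there is no mechanism forcing an arbitrary homeomorphism of closed fillings to carry core to core, and this is exactly what makes the conjecture hard (Gordon--Luecke for $S^3$ is a deep special case, not a model argument). Your invocation of ``the Thurston-norm hypotheses in force'' is misplaced: the conjecture carries no such hypotheses --- those belong to the paper's theorems, which in exchange obtain only the up-to-sign conclusion. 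And the claim that $H_1$ plus the linking form pins $(\alpha,\beta)$ to a finite list is not correct for general $Y$ and non-null-homologous $K$. What you have written is a fair account of why the problem is difficult, not a route through it.
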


The main theorem of this paper is an analogue of
\cite[Theorem~9.7]{OSzRatSurg} and \cite[Theorem~1.5]{NiNSSphere}.
See also \cite{Wu}.

All manifolds in this paper are oriented, unless otherwise stated.

\begin{thm}\label{thm:NormCos}
Suppose $Y$ is a closed $3$--manifold with $b_1(Y)>0$. Let $K$ be
a null-homologous knot in $Y$, then the inclusion map $Y-K\to Y$
induces an isomorphism $H_2(Y-K)\cong H_2(Y)$, so we can identify
$H_2(Y)$ with $H_2(Y-K)$. Suppose $r\in\mathbb Q\cup\{\infty\}$,
let $Y_r(K)$ be the manifold obtained by $r$--surgery on $K$.
Suppose $(Y,K)$ satisfies that
\begin{equation}\label{eq:LargerNorm}x_Y(h)<x_{Y-K}(h),\quad \text{for any nonzero
element}\quad h\in H_2(Y).\end{equation} Here $x_M$ is the
Thurston norm \cite{Th} in $M$. The conclusion is, if two rational
numbers $r,s$ satisfy that $Y_{r}(K)\cong\pm Y_{s}(K)$, then
$r=\pm s$.
\end{thm}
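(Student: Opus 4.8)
\textit{Proof proposal.} The plan is to combine the rational surgery formula for Heegaard Floer homology with the fact that (link) Heegaard Floer homology detects the Thurston norm, exploiting that hypothesis \eqref{eq:LargerNorm} forces the surgery formula to degenerate precisely in the $\spin$ structures that govern $x_{Y-K}$.

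First I would dispose of the homological bookkeeping. Since $K$ is null--homologous, $H_1(Y-K)\cong H_1(Y)\oplus\mathbb Z\langle\mu\rangle$ with $[\lambda]=0$, so if $r=a/b$ in lowest terms with $b>0$ then $H_1(Y_r(K))\cong H_1(Y)\oplus\mathbb Z/a$, and a Mayer--Vietoris argument gives that the inclusion $Y-K\to Y_r(K)$ induces an isomorphism $H_2(Y-K)\cong H_2(Y_r(K))$ for every $r$. Writing $r=a/b$, $s=c/d$ and comparing first homology (in particular the orders of torsion subgroups) shows $|a|=|c|$; comparing $b_1$ disposes of $a=0$, which forces $c=0$ and hence $r=s=0$. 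Thus it remains to show: if $a=c\ge 1$ and $Y_{a/b}(K)\cong\pm Y_{a/d}(K)$ then $b=d$.

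Now fix a nonzero $h\in H_2(Y)$. By \eqref{eq:LargerNorm} we have $x_{Y-K}(h)>x_Y(h)\ge 0$, so by the adjunction inequality every $\spin$ structure $\mathfrak s_0$ on $Y$ with $\langle c_1(\mathfrak s_0),h\rangle=x_{Y-K}(h)$ satisfies $\widehat{HF}(Y,\mathfrak s_0)=0$; on the other hand, since link Floer homology detects the Thurston norm of $Y-K$, at least one such $\mathfrak s_0$ has $\widehat{HFK}(Y,K,\mathfrak s_0)\ne 0$. For a $\spin$ structure $\mathfrak t$ on $Y_{a/b}(K)$ lying over $\mathfrak s_0$ (via filling the surgery torus to recover $Y$), the rational surgery formula identifies $\widehat{HF}(Y_{a/b}(K),\mathfrak t)$ with the homology of a mapping cone built from the complexes $\widehat A_m(Y,K,\mathfrak s_0)$ and the groups $\widehat{HF}(Y,\mathfrak s_0)=0$; as the target groups all vanish, this cone is just a direct sum of $\widehat A_m$'s, indexed by the usual floor--function pattern so that, over the $a$ many $\mathfrak t$ lying over $\mathfrak s_0$, each index $m\in\mathbb Z$ appears exactly $b$ times. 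Since $\langle c_1(\mathfrak t),h\rangle=\langle c_1(\mathfrak s_0),h\rangle$ and $\widehat A_m(Y,K,\mathfrak s_0)\cong\widehat{HF}(Y,\mathfrak s_0)=0$ for $|m|$ large, one obtains $x_{Y_{a/b}(K)}(h)=x_{Y-K}(h)$ together with
\begin{equation*}
P(Y_{a/b}(K),h):=\sum_{\langle c_1(\mathfrak t),h\rangle=x_{Y-K}(h)}\dim_{\mathbb Q}\widehat{HF}(Y_{a/b}(K),\mathfrak t;\mathbb Q)=b\cdot C_h,
\end{equation*}
where $C_h:=\sum_{\langle c_1(\mathfrak s_0),h\rangle=x_{Y-K}(h)}\sum_m\dim_{\mathbb Q}\widehat A_m(Y,K,\mathfrak s_0;\mathbb Q)$ depends only on $(Y,K,h)$, is a positive integer, and is bounded above (over all $h$) by the finite total contribution of those $\mathfrak s_0$ with $\widehat{HFK}(Y,K,\mathfrak s_0)\ne 0$.

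Finally I would play $P$ against itself. The quantity $P(Z,h)$ is a homeomorphism invariant of $(Z,h)$ for $b_1(Z)>0$, and it is insensitive to orientation because only dimensions of $\widehat{HF}$ enter (using $\widehat{HF}(-Z,\mathfrak t;\mathbb Q)\cong\widehat{HF}(Z,\mathfrak t;\mathbb Q)^{*}$). A homeomorphism $Y_{a/b}(K)\to\pm Y_{a/d}(K)$, read through the isomorphisms $H_2(Y-K)\cong H_2(Y_{a/b}(K))$ and $H_2(Y-K)\cong H_2(Y_{a/d}(K))$, induces an automorphism $\Phi$ of $H_2(Y)$ with $b\,C_h=d\,C_{\Phi h}$ for every nonzero $h$; iterating yields $C_{\Phi^{n}h}=(b/d)^{n}C_h$ for all $n\in\mathbb Z$, and since $C$ takes only finitely many positive values this forces $b=d$, i.e.\ $r=\pm s$. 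I expect the main obstacle to lie in the third paragraph: making the degeneration of the rational surgery formula and its $\spin$--structure bookkeeping precise in this generality, invoking the correct form of ``link Floer homology detects $x_{Y-K}$'' for closed classes in an arbitrary $Y$, and verifying the positivity $C_h>0$ (equivalently, that $\bigoplus_m\widehat A_m(Y,K,\mathfrak s_0)\ne 0$ whenever $\widehat{HFK}(Y,K,\mathfrak s_0)\ne 0$ while $\widehat{HF}(Y,\mathfrak s_0)=0$); twisted coefficients may be needed for the detection statements.
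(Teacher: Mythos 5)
Your proposal follows essentially the same strategy as the paper: show that the total rank of $\widehat{HF}$ of $Y_{p/q}(K)$ in the $\spin$ structures evaluating maximally on a fixed nonzero $h\in H_2(Y)$ equals $q\,C_h$ with $C_h$ a positive integer depending only on $(Y,K,h)$, then iterate the automorphism of $H_2(Y)$ induced by the homeomorphism to force $(q_1/q_2)^n C_h$ to be a positive integer for all $n$, which is impossible unless $q_1=q_2$. The one substantive divergence is exactly the point you flag as the main obstacle, the positivity $C_h>0$: the paper does not try to deduce it from ``knot Floer homology detects $x_{Y-K}$'' together with the claim that $\bigoplus_m\widehat A_m\ne 0$ whenever $\widehat{HFK}\ne 0$ (a claim that would itself require an argument, since passing from the associated graded object to the $\widehat A_m$'s can lose information). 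Instead it uses hypothesis (\ref{eq:LargerNorm}) a second time to invoke Gabai's theorem \cite[Corollary~2.4]{G2}, which gives $x_{Y_{p/q}(K)}(h)=x_{Y-K}(h)$, and then applies the closed-manifold non-triviality theorem (Theorem~\ref{thm:UnTwistNorm}) directly to the surgered manifold to get $\widehat{HF}(Y_{p/q}(K)|h)\ne 0$, hence $qC_h\ne 0$; this is the cleaner route and you should adopt it. The surgery formula itself is established in the paper (Theorem~\ref{thm:SurgForm}) by realizing $Y_{p/q}(K)$ as a large integral surgery on $K\# O_{q/r}\subset Y\# L(q,r)$ and descending via the surgery exact triangle, rather than by quoting the full rational-surgeries mapping cone, but this is equivalent in content to your degeneration argument.
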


Sometimes the condition (\ref{eq:LargerNorm}) can be weakened if
there is a certain additional condition. For example, we can prove
the following theorem.

\begin{thm}\label{thm:ZeroNormCos}
Suppose $Y$ is a closed $3$--manifold with $b_1(Y)>0$. Suppose $K$
is a null-homologous knot in $Y$. Suppose $x_Y\equiv0$, while the
restriction of $x_{Y-K}$ on $H_2(Y)$ is nonzero. Then we have the
same conclusion as Theorem~\ref{thm:NormCos}. Namely, if two
rational numbers $r,s$ satisfy that $Y_{r}(K)\cong\pm Y_{s}(K)$,
then $r=\pm s$.
\end{thm}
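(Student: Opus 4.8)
The plan is to distill from a surgered manifold a homeomorphism invariant that equals $|q|$ times a positive constant depending only on $(Y,K)$. Note first that Theorem~\ref{thm:ZeroNormCos} is not a formal consequence of Theorem~\ref{thm:NormCos}: its hypotheses force $x_{Y-K}(h)>x_Y(h)$ only for \emph{some} nonzero $h$, and the assumption $x_Y\equiv 0$ will be used, globally, to kill the error terms in the surgery formula. For the reductions: the Thurston norm is orientation-insensitive, so the hypotheses on $(Y,K)$ survive reversing the orientation of $Y$, and it suffices to treat an orientation-preserving homeomorphism $Y_r(K)\cong Y_s(K)$. Write $r=p/q$, $s=p'/q'$ in lowest terms. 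Since $K$ is null-homologous, $H_1(Y_{p/q}(K))\cong H_1(Y)\oplus\mathbb Z/p$ (with $\mathbb Z/0:=\mathbb Z$), so cancellation of finitely generated abelian groups gives $|p|=|p'|$; the cases $p=0$ and the trivial filling (for which the invariant below is $0$, as $x_Y\equiv 0$) are immediate, so assume $p=p'\ge 1$, and it remains to prove $|q|=|q'|$, which yields $r=\pm s$.

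For a closed $3$--manifold $Y'$ put $\Phi(Y')=\sum_{\mathfrak s}\operatorname{rk}\widehat{HF}(Y',\mathfrak s)$, the sum over the $\mathrm{Spin}^c$ structures $\mathfrak s$ with $c_1(\mathfrak s)$ non-torsion; this is a finite sum, manifestly preserved by homeomorphisms and by orientation reversal. I claim $\Phi(Y_{p/q}(K))=|q|\,R$ for some $R=R(Y,K)\ge 0$. To prove it I would apply the Ozsv\'ath--Szab\'o rational surgery formula, which presents $\widehat{HF}(Y_{p/q}(K))$ as the homology of a mapping cone $\bigoplus_s\hat A_{(\mathfrak t,\,\cdot)}\to\bigoplus_s\hat B_{(\mathfrak t,\,\cdot)}$ splitting over $\mathfrak t\in\mathrm{Spin}^c(Y)$, with $\hat B_{(\mathfrak t,\cdot)}\simeq\widehat{CF}(Y,\mathfrak t)$ and with $\hat A_{(\mathfrak t,j)}$ computing $\widehat{HF}$ of a large integral surgery in the $\mathrm{Spin}^c$ structure $(\mathfrak t,j)$.

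Here is where $x_Y\equiv 0$ enters. Because $K$ is null-homologous, a $\mathrm{Spin}^c$ structure on $Y_{p/q}(K)$ has non-torsion $c_1$ precisely when the associated $\mathfrak t$ does (the complement and the filling have the same rational $H^2$ as $Y$), and for such $\mathfrak t$ the hypothesis $x_Y\equiv 0$ and the adjunction inequality force $\widehat{HF}(Y,\mathfrak t)=0$, i.e.\ $\hat B_{(\mathfrak t,\cdot)}$ is acyclic. Hence in the $\mathfrak t$--summand the target of the cone is acyclic and the homology is $\bigoplus_s H_*(\hat A_{(\mathfrak t,\lfloor(i+ps)/q\rfloor)})$; since $(i,s)\mapsto i+ps$ is a bijection $(\mathbb Z/p)\times\mathbb Z\to\mathbb Z$ and each value of $\lfloor m/q\rfloor$ is attained exactly $|q|$ times, summing over the refining $\mathrm{Spin}^c$ structures of $Y_{p/q}(K)$ gives $|q|\,R_{\mathfrak t}$ with $R_{\mathfrak t}:=\sum_j\operatorname{rk}H_*(\hat A_{(\mathfrak t,j)})$, a finite sum because $\hat A_{(\mathfrak t,j)}\simeq\widehat{CF}(Y,\mathfrak t)$ is acyclic for $|j|$ large. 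Summing over the non-torsion $\mathfrak t$ gives $\Phi(Y_{p/q}(K))=|q|\,R$ with $R=\sum R_{\mathfrak t}$ depending only on $(Y,K)$.

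It remains to show $R>0$, which is where $x_{Y-K}|_{H_2(Y)}\not\equiv 0$ is used. Pick $h\in H_2(Y)$ with $x_{Y-K}(h)>0$. For $n\gg 0$ one has $x_{Y_n(K)}(h)=x_{Y-K}(h)$: a norm-minimizing surface for $h$ gains nothing by meeting the surgery solid torus, since $K$ is null-homologous, so its algebraic intersection with the core vanishes and the boundary it would leave on $\partial(Y-K)$ is null-homologous there, while accommodating such boundary costs more complexity than it saves once $n$ is large. Then $x_{Y_n(K)}(h)>0$, so by the Thurston-norm detection theorem for Heegaard Floer homology there is a $\mathrm{Spin}^c$ structure $\mathfrak s$ on $Y_n(K)$ with $\widehat{HF}(Y_n(K),\mathfrak s)\ne 0$ and $\langle c_1(\mathfrak s),h\rangle\ne 0$; for $n$ large this $\mathfrak s=(\mathfrak t,j)$ lies in the large-surgery range, $c_1(\mathfrak t)$ is non-torsion, and $H_*(\hat A_{(\mathfrak t,j)})=\widehat{HF}(Y_n(K),\mathfrak s)\ne 0$, whence $R_{\mathfrak t}>0$ and $R>0$. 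Then a homeomorphism $Y_r(K)\cong\pm Y_s(K)$ gives $|q|\,R=\Phi(Y_r(K))=\Phi(Y_s(K))=|q'|\,R$, so $|q|=|q'|$, and with $|p|=|p'|$ we conclude $r=\pm s$. The main obstacle I expect is exactly the step $R>0$: one must invoke the Thurston-norm detection results where they genuinely apply (handling possible reducibility of $Y_n(K)$, verifying $x_{Y_n(K)}|_{H_2(Y)}=x_{Y-K}|_{H_2(Y)}$ for large $n$, and checking that the detected $\mathrm{Spin}^c$ structure sits in the large-surgery range); the remaining bookkeeping with the surgery formula and the homological reductions is routine, and $x_Y\equiv 0$ is used precisely once, to annihilate $\hat B_{(\mathfrak t,\cdot)}$ for all non-torsion $\mathfrak t$ simultaneously, which is what makes the clean identity $\Phi(Y_{p/q}(K))=|q|\,R$ possible and why this argument does not subsume Theorem~\ref{thm:NormCos}.
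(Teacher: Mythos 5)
Your proposal is essentially correct, and its core engine coincides with the paper's: (i) $x_Y\equiv 0$ plus the adjunction inequality kills $\widehat{HF}(Y,\mathfrak t)$ for every non-torsion $\mathfrak t$; (ii) a surgery formula then shows the relevant rank in $Y_{p/q}(K)$ is $q$ times a constant depending only on $(Y,K)$; (iii) Gabai-type norm preservation under filling plus the Floer-homological detection of the Thurston norm shows the constant is positive. Where you genuinely diverge is the endgame. The paper fixes a single class $h$ with $x_{Y-K}(h)\ne 0$, works with $\widehat{HF}(Y_{p/q}(K)|h)$, and must then iterate $f_*$ on $H_2$ to get ranks scaling like $(q_1/q_2)^n$, since the constant $C_h$ depends on $h$ and a homeomorphism need not preserve $h$. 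Your invariant $\Phi$, the total rank over all non-torsion $\mathrm{Spin}^c$ structures, is defined without reference to any class and is manifestly preserved by homeomorphisms in either orientation, so $|q_1|R=|q_2|R$ with $R>0$ finishes immediately. This is a cleaner conclusion and dispenses with the orbit bookkeeping; it buys nothing extra in generality but is a real simplification of the final step. (You are also right that Theorem~\ref{thm:ZeroNormCos} is not formally implied by Theorem~\ref{thm:NormCos}; the paper's own proof is a three-line reduction to the argument of Theorem~\ref{thm:NormCos} after observing point (i) for the classes that matter.)

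Two steps need shoring up. First, your justification that $x_{Y_n(K)}(h)=x_{Y-K}(h)$ for $n\gg 0$ (``accommodating such boundary costs more complexity than it saves'') is a heuristic, not a proof; the correct tool is \cite[Corollary~2.4]{G2}, which the paper cites, and which in fact gives the equality for \emph{all} nontrivial fillings because the unique possibly-degenerate filling is the trivial one (where the norm drops, as $x_Y\equiv 0 < x_{Y-K}(h)$). Second, you invoke the full Ozsv\'ath--Szab\'o rational-surgery mapping cone for a null-homologous knot in a general $Y$ with its $\mathrm{Spin}^c$ refinement as a black box; in the generality needed here this is not simply a citation to \cite[Theorem~1.1]{OSzRatSurg}, and the paper instead proves the weaker statement it actually needs (Theorem~\ref{thm:SurgForm}, via large surgeries on $K\# O_{q/r}\subset Y\# L(q,r)$ and the surgery exact triangle). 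The content you extract from the mapping cone --- that over each $\mathfrak t$ with $\widehat{HF}(Y,\mathfrak t)=0$ the rank is $|q|\sum_j\mathrm{rk}\,H_*(\widehat A_{(\mathfrak t,j)})$ --- is exactly Theorem~\ref{thm:SurgForm} together with Proposition~\ref{prop:qCLarge}, so your argument goes through verbatim once you substitute that input.
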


\noindent{\bf Acknowledgements.}\quad The author is partially
supported by an AIM Five-Year Fellowship and NSF grant number
DMS-0805807.

\section{Non-triviality theorems}

In this section, we will state some non-triviality theorems in
Heegaard Floer homology. We first set up some notations we will
use in this paper.

Let $Y$ be a closed $3$--manifold. Suppose $\mathfrak S$ is a
subset of $\spin(Y)$, let
$$HF^{\circ}(Y,\mathfrak S)=\bigoplus_{\mathfrak s\in\mathfrak S}HF^{\circ}(Y,\mathfrak s),$$
where $HF^{\circ}$ is one of
$\widehat{HF},HF^{\infty},HF^{+},HF^-$. Furthermore, if $h\in
H_2(Y)$, then
$$HF^{\circ}(Y,h,i)=\bigoplus_{\mathfrak s\in\spin(Y),\langle c_1(\mathfrak s),h\rangle=2i}HF^{\circ}(Y,\mathfrak s).$$
Similarly, if $F$ is a Seifert surface for a knot $K\subset Y$,
then
$$\widehat{HFK}(Y,K,[F],i)=\bigoplus_{\xi\in\underline{\mathrm{Spin}^c}(Y,K)
,\langle c_1(\xi),\widehat F\rangle=2i}\widehat{HFK}(Y,K,\xi),$$
see \cite{OSzKnot} for more details. Following Kronheimer and
Mrowka \cite{KMsuture}, let
$$HF^{\circ}(Y|h)=HF^{\circ}(Y,h,\frac12x(h)).$$

A very important feature of Heegaard Floer homology is that it
detects the Thurston norm of a $3$--manifold. In \cite{OSzAnn2},
this result is stated for universally twisted Heegaard Floer
homology. Nevertheless, this result should also hold if one uses
untwisted coefficients. In fact, the analogous result for Monopole
Floer homology is stated with untwisted coefficients
\cite[Corollary~41.4.2]{KMBook}. In order to state our results, we
first recall two definitions.

\begin{defn}
Suppose $M$ is a compact $3$--manifold, a properly embedded
surface $S\subset M$ is {\it taut} if $x(S)=x([S])$ in
$H_2(M,\partial S)$, no proper subsurface of $S$ is
null-homologous, and if any component of $S$ lies in a homology
class that is represented by an embedded sphere then this
component is a sphere. Here $x(\cdot)$ is the Thurston norm.
\end{defn}

\begin{defn}
Suppose $K$ is a null-homologous knot in a closed $3$--manifold
$Y$. An oriented surface $F\subset Y$ is a {\it Seifert-like
surface} for $K$, if $\partial F=K$. When $F$ is connected, we say
that $F$ is a {\it Seifert surface} for $K$. We also view a
Seifert-like surface as a proper surface in $Y-\ond(K)$.
\end{defn}

As in the proof of \cite[Theorem~2.2]{HN}, using the known
non-triviality results for twisted coefficients stated in
\cite{NiNSSphere} and the Universal Coefficients Theorem, we can
prove the following theorems. (The same results can also be proved
via the approach taken in \cite{Ju,KMsuture}.)

\begin{thm}\label{thm:UnTwistNorm}
Suppose $Y$ is a closed $3$--manifold, $h\in H_2(Y)$, then
$${HF^+}(Y|h)\otimes\mathbb Q\ne0, \quad {\widehat{HF}}(Y|h)\otimes\mathbb Q\ne0.$$
\end{thm}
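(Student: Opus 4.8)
\medskip
\noindent\textbf{Proof strategy.}\quad The plan is to reduce both assertions to the twisted non-triviality results of Ozsv\'ath--Szab\'o \cite{OSzAnn2} (as stated in \cite{NiNSSphere}) and then run the Universal Coefficients argument of \cite[Theorem~2.2]{HN}. First I would make two reductions. It suffices to prove $\widehat{HF}(Y|h)\otimes\mathbb Q\ne0$: the $U$--action on $HF^+(Y)$ preserves the splitting along $\spin$ structures, hence the grading by $\langle c_1(\cdot),h\rangle$, so the short exact sequence $0\to\widehat{CF}(Y)\to CF^+(Y)\xrightarrow{U}CF^+(Y)\to0$ gives a long exact sequence in which $\widehat{HF}(Y|h)$ sits between two copies of $HF^+(Y|h)$; thus $HF^+(Y|h)\otimes\mathbb Q=0$ would force $\widehat{HF}(Y|h)\otimes\mathbb Q=0$. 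Also, the case $x(h)=0$ is elementary: then $h$ is represented by a disjoint union of spheres and tori, so by the adjunction inequality every $\spin$ structure with $\widehat{HF}\ne0$ pairs trivially with $h$, whence $\widehat{HF}(Y|h)=\widehat{HF}(Y)\ne0$. So the substantive case is $x(h)>0$, where all the relevant $\spin$ structures are non-torsion.

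Set $\Lambda=\mathbb Q[H^1(Y;\mathbb Z)]$ (equivalently $\mathbb Q[H_2(Y;\mathbb Z)]$, by Poincar\'e duality). Since $H^1(Y;\mathbb Z)$ is free abelian, $\Lambda$ is a Laurent polynomial ring, in particular an integral domain; let $\mathcal K$ denote its fraction field. Let $\underline C_\ast$ be the universally twisted complex over $\Lambda$ computing $\underline{\widehat{HF}}(Y|h;\Lambda)$ --- a direct summand of $\underline{\widehat{CF}}(Y;\Lambda)$, hence a finite complex of finitely generated free $\Lambda$--modules --- and note that the untwisted rational complex $\widehat{CF}(Y|h;\mathbb Q)$ is $\underline C_\ast\otimes_\Lambda\mathbb Q$, where $\epsilon\co\Lambda\to\mathbb Q$ is the augmentation sending every group element to $1$. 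The input I would take from \cite{OSzAnn2,NiNSSphere} is that, for $x(h)>0$, the module $\underline{\widehat{HF}}(Y|h;\Lambda)$ is \emph{non-torsion} over $\Lambda$ --- equivalently $\underline{\widehat{HF}}(Y|h;\Lambda)\otimes_\Lambda\mathcal K\ne0$. This is the form in which the adjunction and sutured-decomposition arguments there produce nonzero classes in the extremal grading $\tfrac12 x(h)$ (for instance via images of top homology classes of a taut surface).

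Then comes the Universal Coefficients step. For a finite complex of finitely generated free modules over a domain $\Lambda$, the rank of each differential cannot increase under a ring map $\Lambda\to k$ to a field, so
$$\dim_k H_\ast(\underline C_\ast\otimes_\Lambda k)\;\ge\;\dim_{\mathcal K} H_\ast(\underline C_\ast\otimes_\Lambda\mathcal K)\;=\;\operatorname{rank}_\Lambda\underline{\widehat{HF}}(Y|h;\Lambda).$$
Applying this with $k=\mathbb Q$ and the above input gives $\dim_{\mathbb Q}\big(\widehat{HF}(Y|h)\otimes\mathbb Q\big)\ge\operatorname{rank}_\Lambda\underline{\widehat{HF}}(Y|h;\Lambda)>0$; hence $\widehat{HF}(Y|h)\otimes\mathbb Q\ne0$, and then $HF^+(Y|h)\otimes\mathbb Q\ne0$ by the first reduction. (Alternatively one may run the universal coefficients spectral sequence $\operatorname{Tor}^\Lambda_\ast\big(\underline{\widehat{HF}}(Y|h;\Lambda),\mathbb Q\big)\Rightarrow\widehat{HF}(Y|h;\mathbb Q)$ and observe that a free $\Lambda$--summand of the homology survives; this is presumably the shape of the argument in \cite[Theorem~2.2]{HN}.)

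The step I expect to require the most care is the use of \cite{OSzAnn2,NiNSSphere} in this sharper ``non-torsion'' form rather than as bare non-triviality. A torsion twisted module may collapse under the trivial specialization --- e.g.\ $\Lambda/(t-1)\ne0$ but becomes $0$ after $t\mapsto1$, and in general one has no control over which maximal ideal such a module is supported at --- so knowing merely $\underline{\widehat{HF}}(Y|h;\Lambda)\ne0$ would not close the argument. Thus one must verify that the classes produced by the sutured-manifold decomposition in those references actually generate a non-torsion $\Lambda$--submodule of $\underline{\widehat{HF}}(Y|h;\Lambda)$; once that is in place, everything else --- the passage from $\mathbb Z$ to $\mathbb Q$ coefficients, and the comparison of the special fiber at $\epsilon$ with the generic fiber over $\mathcal K$ --- is formal.
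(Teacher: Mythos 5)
Your proposal is correct and follows essentially the same route as the paper, which gives no details beyond citing the method of \cite[Theorem~2.2]{HN}: view the untwisted rational complex as the specialization at the augmentation of the universally twisted complex over $\Lambda=\mathbb{Q}[H^1(Y;\mathbb{Z})]$, and use semicontinuity of rank to compare with the generic fiber, where the twisted non-triviality results quoted from \cite{NiNSSphere} live. Your explicit flagging of the needed input in its non-torsion form (nonvanishing after tensoring with the fraction field, equivalently with a Novikov field attached to an injective $\omega$) is precisely the point the paper's appeal to the Universal Coefficients Theorem is meant to encapsulate.
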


\begin{thm}
Suppose $K$ is a null-homologous knot in a closed 3--manifold $Y$.
Let $F$ be a taut Seifert-like surface for $K$. Then
$${\widehat{HFK}}(Y,K,[F],\frac{x(F)+1}2)\otimes\mathbb Q\ne0.$$
\end{thm}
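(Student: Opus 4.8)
The plan is to reduce to the version with twisted coefficients and then descend, following the template of the proof of \cite[Theorem~2.2]{HN}. Recall from \cite{NiNSSphere} the non-triviality of the universally twisted knot Floer homology: viewing $F$ as a properly embedded surface in $Y-\ond(K)$ and twisting by a suitable group ring $\mathcal{R}=\mathbb{Z}[H]$ (the group ring on $H_1(Y-\ond(K);\mathbb{Z})$, say), the group $\underline{\widehat{HFK}}(Y,K,[F],\tfrac{x(F)+1}{2})$ is non-trivial, and in fact non-trivial in the sharp form recorded there (it has positive rank over $\mathcal{R}$, equivalently it is not annihilated after inverting the Laurent-polynomial part of $\mathcal{R}$; in the fibred case it is $\mathcal{R}$ itself). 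Since the differential on the universally twisted complex $\underline{\widehat{CFK}}(Y,K)$ preserves the Alexander grading, I would fix $j_0:=\tfrac{x(F)+1}{2}$ and pass to the corresponding summand $\underline{C}$: a bounded complex of finitely generated free $\mathcal{R}$-modules with non-trivial homology, whose base change $\underline{C}\otimes_{\mathcal{R}}\mathbb{Z}$ along the augmentation $\varepsilon\co\mathcal{R}\to\mathbb{Z}$ is the untwisted complex $\widehat{CFK}(Y,K,[F],j_0)$.

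It then remains to show that the homology does not die under this base change after further tensoring with $\mathbb{Q}$, i.e.\ that $H_*\bigl(\underline{C}\otimes_{\mathcal{R}}\mathbb{Q}\bigr)\neq 0$; because $\widehat{CFK}(Y,K,[F],j_0)$ is a complex of free abelian groups, this group is exactly $\widehat{HFK}(Y,K,[F],j_0)\otimes\mathbb{Q}$, which is what we want. I would argue this by a Universal Coefficients argument over $\mathcal{R}_{\mathbb{Q}}=\mathbb{Q}[H]$. The essential point is that the support of the twisted homology in Alexander grading $j_0$ contains the point of $\operatorname{Spec}\mathcal{R}_{\mathbb{Q}}$ corresponding to the untwisting $\varepsilon$; \cite{NiNSSphere} supplies this, since a non-torsion $\mathcal{R}$-module (or one with a free summand) has full support. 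Localizing $\underline{C}\otimes\mathbb{Q}$ at that point gives a perfect complex over the local ring $(\mathcal{R}_{\mathbb{Q}})_{\mathfrak{m}_{\varepsilon}}$ with non-zero homology, so a Nakayama-type argument for perfect complexes over local rings (equivalently, upper semicontinuity of the dimension of the homology of the specialized perfect complex along $\operatorname{Spec}\mathcal{R}_{\mathbb{Q}}$, or the Universal Coefficients spectral sequence $\operatorname{Tor}^{\mathcal{R}_{\mathbb{Q}}}_{p}\bigl(\underline{\widehat{HFK}}(Y,K,[F],j_0),\mathbb{Q}\bigr)\Rightarrow\widehat{HFK}(Y,K,[F],j_0)\otimes\mathbb{Q}$, in which a free summand of the twisted homology contributes a summand no differential can touch) yields $\widehat{HFK}(Y,K,[F],j_0)\otimes\mathbb{Q}\neq 0$. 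As indicated after the theorem, one can instead run the whole argument through sutured Floer homology: put a meridional suture on $Y-\ond(K)$ and decompose along the taut surface $F$; the result is taut, so its sutured Floer homology in the extremal $\spin$ grading is non-zero by \cite{Ju} (or by \cite{KMsuture} in the monopole setting), and this is the group in question.

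The main obstacle is precisely this descent step, and the reason \cite{NiNSSphere} must be invoked in its sharp form rather than as a bare non-vanishing: a non-trivial module over $\mathcal{R}$ that is torsion over the Laurent-polynomial part can become zero after specialization at $\varepsilon$ (for instance $\mathcal{R}/(t-2)$ over $\mathbb{Z}[t^{\pm1}]$), so one genuinely needs the twisted group in grading $j_0$ to have positive rank, equivalently to contain the untwisting point in its support. Once that input is secured, the rest is bookkeeping: isolating the Alexander-graded summand $\underline{C}$, keeping track of the relative $\spin$ structures over which the twisting is taken and of the resulting coefficient ring (whose Laurent part is non-trivial exactly because $b_1(Y)>0$), and passing to $\mathbb{Q}$-coefficients so that the Universal Coefficients / semicontinuity argument applies verbatim. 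None of this is deep provided the strong twisted non-vanishing of \cite{NiNSSphere} is available, which it is.
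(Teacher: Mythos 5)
Your proposal follows exactly the route the paper takes (and leaves mostly implicit): invoke the twisted-coefficient non-vanishing from \cite{NiNSSphere} and descend to untwisted $\mathbb{Q}$-coefficients via a Universal Coefficients argument, as in the proof of \cite[Theorem~2.2]{HN}, with the sutured Floer homology argument via \cite{Ju,KMsuture} as the alternative the paper also mentions. You correctly isolate the one genuine subtlety --- that the twisted input must be non-torsion (have the augmentation point in its support) so that semicontinuity applies --- so the proposal is complete and matches the paper's proof.
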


\section{A surgery formula}

Suppose $K\subset Y$ is a null-homologous knot. Let $Y_{p/q}(K)$
denote the manifold obtained by $\frac pq$--surgery on $K$. Note
that there is a natural identification
$$\mathrm{Spin}^c(Y_{p/q}(K))\cong\mathrm{Spin}^c(Y)\times\mathbb Z/p\mathbb Z.$$
Let $\pi\co\mathrm{Spin}^c(Y_{p/q}(K))\to\mathrm{Spin}^c(Y)$ be
the projection to the first factor.

The goal of this section is to prove the following theorem, which
is a (much easier) analogue of \cite[Theorem~1.1]{OSzRatSurg}.

\begin{thm}\label{thm:SurgForm}
Suppose $K\subset Y$ is a null-homologous knot. If
$\widehat{HF}(Y,\mathfrak s)=0$, then there exists a constant
$C=C(Y,K,\mathfrak s)$, such that
$$\mathrm{rank}\:\widehat{HF}(Y_{p/q}(K),\pi^{-1}(\mathfrak s))=qC.$$
\end{thm}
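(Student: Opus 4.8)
The plan is to derive this from the rational surgery mapping cone formula of Ozsv\'ath and Szab\'o \cite{OSzRatSurg}, which applies to a null-homologous knot $K$ in an arbitrary closed $3$--manifold $Y$ because $CFK^\infty(Y,K)$ splits along $\mathrm{Spin}^c(Y)$. Fix $\mathfrak s\in\mathrm{Spin}^c(Y)$. From $CFK^\infty(Y,K,\mathfrak s)$ one extracts, for each $i\in\mathbb Z$, a finitely generated complex $\hat A_i=\hat A_i(\mathfrak s)$ together with chain maps $\hat v_i,\hat h_i\co\hat A_i\to \hat B$, where $\hat B\simeq\widehat{CF}(Y,\mathfrak s)$. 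For $p/q>0$ the surgery formula identifies $\widehat{HF}(Y_{p/q}(K),\pi^{-1}(\mathfrak s))$ with the homology of the mapping cone of
\begin{equation*}
\hat D_{p/q}\co\quad \bigoplus_{s\in\mathbb Z}\hat A_{\lfloor s/q\rfloor}\ \longrightarrow\ \bigoplus_{s\in\mathbb Z}\hat B_s,\qquad \hat B_s\simeq\hat B,
\end{equation*}
where $\hat D_{p/q}$ is the standard differential assembled from the maps $\hat v_{\lfloor s/q\rfloor}$ and $\hat h_{\lfloor s/q\rfloor}$ and where the residue of $s$ modulo $p$ records the $\mathbb Z/p\mathbb Z$--factor in $\mathrm{Spin}^c(Y_{p/q}(K))\cong\mathrm{Spin}^c(Y)\times\mathbb Z/p\mathbb Z$. (The case $p/q<0$ is parallel, e.g.\ by passing to $-Y$; the case $p=0$ forces $q=1$ and is a degenerate instance.) So it suffices to compute the rank of the homology of this cone.

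Next I would unpack the hypothesis $\widehat{HF}(Y,\mathfrak s)=0$. First, $H_*(\hat B)=\widehat{HF}(Y,\mathfrak s)=0$, so the target $\bigoplus_{s}\hat B_s$ of $\hat D_{p/q}$ is acyclic. Second, there is a threshold $N=N(Y,K,\mathfrak s)$ (one may take $N$ to be the Seifert genus of $K$) such that $\hat v_i$ is a quasi-isomorphism for every $i\ge N$ and $\hat h_i$ is a quasi-isomorphism for every $i\le -N$; hence $H_*(\hat A_i)\cong H_*(\hat B)=0$ whenever $|i|\ge N$. Therefore
$$C:=\sum_{i\in\mathbb Z}\mathrm{rank}\,H_*(\hat A_i(\mathfrak s))=\sum_{|i|<N}\mathrm{rank}\,H_*(\hat A_i(\mathfrak s))$$
is a finite nonnegative integer depending only on $(Y,K,\mathfrak s)$, and in particular not on $p$ or $q$.

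Finally, since the target of $\hat D_{p/q}$ is acyclic, the long exact sequence of the mapping cone gives $\mathrm{rank}\,H_*\bigl(\mathrm{Cone}(\hat D_{p/q})\bigr)=\mathrm{rank}\,H_*\bigl(\bigoplus_{s\in\mathbb Z}\hat A_{\lfloor s/q\rfloor}\bigr)$. For each $i\in\mathbb Z$ there are exactly $q$ integers $s$ with $\lfloor s/q\rfloor=i$, so
$$\mathrm{rank}\,\widehat{HF}(Y_{p/q}(K),\pi^{-1}(\mathfrak s))=\sum_{s\in\mathbb Z}\mathrm{rank}\,H_*(\hat A_{\lfloor s/q\rfloor})=q\sum_{i\in\mathbb Z}\mathrm{rank}\,H_*(\hat A_i(\mathfrak s))=qC,$$
which is the assertion.

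I expect the only real subtleties to be bookkeeping ones: checking that the surgery formula of \cite{OSzRatSurg}, originally phrased for knots in $S^3$, splits along $\mathrm{Spin}^c(Y)$ in the required way, so that the summand built from $CFK^\infty(Y,K,\mathfrak s)$ computes exactly the spin$^c$ structures in $\pi^{-1}(\mathfrak s)$; and verifying that the ``eventual quasi-isomorphism'' statements for $\hat v_i$ and $\hat h_i$ hold with a threshold $N$ independent of the surgery coefficient. Once those points are settled the argument is purely formal; one could in principle iterate the surgery exact triangle instead, but tracking ranks through the connecting homomorphisms is less transparent, so the mapping cone seems the cleaner route.
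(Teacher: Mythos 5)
Your argument is correct in outline, but it takes a genuinely different route from the paper. You run everything through the full rational-surgery mapping cone $\hat D_{p/q}\colon\bigoplus_{s}\hat A_{\lfloor s/q\rfloor}\to\bigoplus_{s}\hat B_s$, kill the target using $\widehat{HF}(Y,\mathfrak s)=0$, and count each $\hat A_i$ exactly $q$ times; the finiteness of $\sum_i\mathrm{rank}\,H_*(\hat A_i)$ and the long exact sequence of the cone then give $qC$. The paper avoids the mapping cone entirely: it realizes $Y_{p/q}(K)$ as an integer ($a$-framed) surgery on $K'=K\# O_{q/r}\subset Y'=Y\# L(q,r)$ with $p=aq+r$, applies the large-surgery theorem to $K'$ to get $\mathrm{rank}\,\widehat{HF}(Y'_{m\mu'+\lambda'}(K'),\pi_m^{-1}(\mathfrak s))=qC$ for $m\gg0$ (the factor $q$ arising because the projection $\Pi_1$ on relative $\mathrm{Spin}^c$ structures is $q$-to-one and the K\"unneth formula identifies $\hat A_{\xi'}(Y',K')$ with $\hat A_{\Pi_1(\xi')}(Y,K)$), and then uses the surgery exact triangle together with $\widehat{HF}(Y',P_1^{-1}(\mathfrak s))=0$ to propagate this rank from large $m$ down to the actual framing $a$. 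The two proofs exploit the same two facts --- only finitely many of the $\hat A$'s have nonzero homology, and each is counted $q$ times --- but the point you set aside as ``bookkeeping,'' namely that the rational mapping cone formula of \cite{OSzRatSurg} is stated there only for knots in $S^3$, is precisely why the paper takes the longer road: the large-surgery theorem of that reference is proved for rationally null-homologous knots in arbitrary $Y$ with the needed $\mathrm{Spin}^c$ refinement, whereas the full rational mapping cone in that generality is not something you can simply quote. If you keep your route you must supply that generalization; otherwise the exact-triangle argument is the cheaper way to descend from large surgeries, and it is exactly your vanishing hypothesis that makes the triangle degenerate into the required isomorphism.
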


\subsection{Large surgeries on rationally null-homologous knots}

Suppose $K\subset Y$ is a rationally null-homologous knot. We
construct a Heegaard diagram
$(\Sigma,\mbox{\boldmath$\alpha$},\mbox{\boldmath$\beta$},w,z)$
for $(Y,K)$, such that $\beta_1=\mu$ is a meridian of $K$.
Moreover, $w,z$ are two base points associated with a marked point
on $\beta_1$ as in \cite{OSzKnot}. There is a curve
$\lambda\subset \Sigma$ which gives rise to the knot $K$. Doing
oriented cut-and-pastes to $\lambda$ and $m$ parallel copies of
$\mu$, we get a connected simple closed curve supported in a small
neighborhood of $\mu\cup\lambda$. We often denote this curve by
$m\mu+\lambda$. The $m$ parallel copies of $\mu$ are supported in
a small neighborhood of $\mu$. We call this neighborhood the {\it
winding region} for $m\mu+\lambda$.
$(\Sigma,\mbox{\boldmath$\alpha$},\mbox{\boldmath$\gamma$},z)$ is
a diagram for $Y_{m\mu+\lambda}(K)$, where $\gamma_1=m\mu+\lambda$
and all other $\gamma_i$'s are small Hamiltonian translations of
$\beta_i$'s.

\begin{defn}\label{defn:Xi}
As in \cite[Section~4]{OSzRatSurg}, one defines a map
$$\Xi\co\mathrm{Spin}^c(Y_{m\mu+\lambda}(K))\to\underline{\mathrm{Spin}^c}(Y,K)$$
as follows. If $\mathfrak
t\in\mathrm{Spin}^c(Y_{m\mu+\lambda}(K))$ is represented by a
 point $\mathbf y$ supported in
the winding region, let $\mathbf x\in\mathbb T_{\alpha}\cap\mathbb
T_{\beta}$ be the ``nearest point", and let $\psi\in\pi_2(\mathbf
y,\Theta,\mathbf x)$ be a small triangle. Then
\begin{equation}\label{eq:Xi}
\Xi(\mathfrak t)=\underline{\mathfrak s}_{w,z}(\mathbf
x)+\big(n_w(\psi)-n_z(\psi)\big)\cdot\mu.
\end{equation}
\end{defn}

When we construct the Heegaard triple diagram
$$(\Sigma,\mbox{\boldmath$\alpha$},\mbox{\boldmath$\beta$},\mbox{\boldmath$\gamma$},w,z),$$
the position of the meridian $\beta_1$ relative to the
 points in $\lambda\cap\gamma_1$ may vary. Our next lemma
says that the choice of the position of $\beta_1$ does not affect
the definition of $\Xi$.

\begin{lem}
Suppose we have two Heegaard triple diagrams as above
$$\Gamma_1=(\Sigma,\mbox{\boldmath$\alpha$},\mbox{\boldmath$\beta$}^1,\mbox{\boldmath$\gamma$},w^1,z^1),\quad
\Gamma_2=(\Sigma,\mbox{\boldmath$\alpha$},\mbox{\boldmath$\beta$}^2,\mbox{\boldmath$\gamma$},w^2,z^2).$$
The two sets $\mbox{\boldmath$\beta$}^1$ and
$\mbox{\boldmath$\beta$}^2$ differ at the meridian, where the
meridian $\beta_1^2\in\mbox{\boldmath$\beta$}^2$ is a parallel
translation of the meridian
$\beta_1^1\in\mbox{\boldmath$\beta$}^1$, still supported in the
winding region. The two base points are moved together with the
meridian.

Using these two diagrams, we can define two maps
$$\Xi^1,\Xi^2\co\mathrm{Spin}^c(Y_{m\mu+\lambda}(K))\to\underline{\mathrm{Spin}^c}(Y,K).$$
Then $\Xi^1=\Xi^2$.
\end{lem}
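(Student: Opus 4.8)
The plan is to reduce the statement to a single ``elementary'' translation of the meridian and then to compare the two expressions for $\Xi$ essentially term by term; a more conceptual variant sidesteps the computation once one has on hand the characterization of $\Xi$ established in the analogue of \cite[Section~4]{OSzRatSurg}.

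First I would set up the reduction. Since $\beta_1^2$ is a parallel translation of $\beta_1^1$ inside the annular winding region, after an isotopy we may assume $\beta_1^1$ and $\beta_1^2$ are disjoint and that, in passing from one to the other, the meridian crosses the other Heegaard curves meeting the winding region one at a time (equivalently, its position relative to the points of $\lambda\cap\gamma_1$ changes one step at a time). Because the assertion ``$\Xi^1=\Xi^2$'' is transitive under composing such translations, and is unaffected by first enlarging $m$ (using the independence of $\Xi$ of the remaining choices, as in the analogue of \cite[Section~4]{OSzRatSurg}), it suffices to treat the case in which $\Gamma_2$ arises from $\Gamma_1$ by a single such step, with the base points carried along. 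If this step can be realized by an ambient isotopy of $\Sigma$ fixing every Heegaard curve there is nothing to prove, so the content is the case in which $\beta_1$ must be pushed across points of the diagram.

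For this elementary case the self-contained route is a direct comparison of~(\ref{eq:Xi}). Note first that the representative $\mathbf y\in\mathbb T_{\alpha}\cap\mathbb T_{\gamma}$ of $\mathfrak t$ may be taken to be the same in both diagrams, since $\mbox{\boldmath$\beta$}$ plays no role there; so only the nearest point $\mathbf x$ and the small triangle $\psi$ change. Writing $\mathbf x^1,\psi^1$ and $\mathbf x^2,\psi^2$ for these in $\Gamma_1$ and in $\Gamma_2$, and using that $\mbox{\boldmath$\beta$}^1$ and $\mbox{\boldmath$\beta$}^2$ agree away from the meridian, $\mathbf x^1$ and $\mathbf x^2$ agree in every coordinate except the one on the meridian, which moves to an adjacent intersection point with one $\alpha$--curve, and
\begin{align*}
\Xi^1(\mathfrak t)-\Xi^2(\mathfrak t) ={}&\big(\underline{\mathfrak s}_{w^1,z^1}(\mathbf x^1)-\underline{\mathfrak s}_{w^2,z^2}(\mathbf x^2)\big)\\
&{}+\big((n_{w^1}(\psi^1)-n_{z^1}(\psi^1))-(n_{w^2}(\psi^2)-n_{z^2}(\psi^2))\big)\cdot\mu.
\end{align*}
I would evaluate the first difference by the standard formula presenting a difference of relative $\mathrm{Spin}^c$ structures as the Poincar\'e dual of a $1$--cycle --- here the cycle built from the arc of $\alpha$ joining the two meridional intersection points together with the arcs of $\beta_1^1$ and $\beta_1^2$ swept out by the translation --- plus an extra term $(n_w-n_z)\cdot\mathrm{PD}[\mu]$ accounting for the motion of the base points; and I would evaluate the second difference by juxtaposing $\psi^1$ with the small ``change of meridian'' triangle and comparing with $\psi^2$, equivalently by gluing $\psi^1$ and $\psi^2$ along $\Theta$ and the relevant $\alpha$--curve into a small rectangle and reading off its value of $n_w-n_z$. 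One then checks that the two contributions are opposite, so the displayed difference vanishes.

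The bookkeeping in this last step is what I expect to be the main obstacle. The delicate points are: the base points $w,z$ move \emph{together with} the meridian, so the comparison is not between $\underline{\mathfrak s}_{w,z}(\mathbf x^1)$ and $\underline{\mathfrak s}_{w,z}(\mathbf x^2)$ for one fixed choice of base points, and one must verify that the term coming from the base-point motion is exactly the multiple of $\mathrm{PD}[\mu]$ that cancels the triangle correction; the nearest point and the small triangle are defined only up to the choices already handled in defining $\Xi$, so the choices in $\Gamma_1$ and in $\Gamma_2$ must be made compatibly (or the earlier independence invoked); and the signs and the identification $H^2(Y,K)\cong H_1(Y-K)$ must be tracked consistently throughout. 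Drawing the local model in the winding region reduces all of this to a finite count of winding numbers $n_w-n_z$ of explicit small domains, after which it is routine. Finally I would record the conceptual shortcut: if, as in the analogue of \cite[Section~4]{OSzRatSurg}, one knows that $\Xi$ is a section of the natural surjection $\relspin(Y,K)\to\spin(Y_{m\mu+\lambda}(K))$ induced by filling along the slope $m\mu+\lambda$, with image the set of $\xi$ for which $\langle c_1(\xi),\widehat F\rangle$ lies in a fixed half-open interval of length $2m$, then --- $\psi$ being \emph{small}, so $|n_w(\psi)-n_z(\psi)|$ is bounded uniformly in $\mathfrak t$ --- both $\Xi^1$ and $\Xi^2$ coincide with this (unique) section, and the lemma follows.
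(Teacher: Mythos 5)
Your reduction to a single elementary translation and the plan to compare the two local pictures has the right general shape, but the argument breaks at the step where you assert that the representative $\mathbf y\in\mathbb T_{\alpha}\cap\mathbb T_{\gamma}$ of $\mathfrak t$ ``may be taken to be the same in both diagrams, since $\mbox{\boldmath$\beta$}$ plays no role there.'' The Spin$^c$ structure $\mathfrak s_w(\mathbf y)$ on $Y_{m\mu+\lambda}(K)$ is computed from $(\Sigma,\mbox{\boldmath$\alpha$},\mbox{\boldmath$\gamma$},w)$, and while $\mbox{\boldmath$\beta$}$ indeed plays no role there, the basepoint $w$ does --- and $w$ is carried along with the meridian, so in your elementary move it crosses a strand of $\gamma_1$ in the winding region. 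By \cite[Lemma~2.19]{OSzAnn1} this gives $\mathfrak s_{w^2}(\mathbf y)-\mathfrak s_{w^1}(\mathbf y)=\mathrm{PD}(\mu)$, which is nonzero in $H^2(Y_{m\mu+\lambda}(K))$ for large $m$. Hence a fixed $\mathbf y$ does \emph{not} represent the same $\mathfrak t$ in the two diagrams, and your term-by-term comparison of $\mathbf x^1,\psi^1$ with $\mathbf x^2,\psi^2$ would be computing $\Xi^1(\mathfrak t)-\Xi^2(\mathfrak t+\mathrm{PD}(\mu))$ rather than $\Xi^1(\mathfrak t)-\Xi^2(\mathfrak t)$; done correctly it yields a net discrepancy of $\pm\mu$, not the cancellation you anticipate. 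This shift of representative is precisely the content of the lemma: the paper's proof replaces $\mathbf y^1$ by the point $\tilde{\mathbf y}^1$ whose $\gamma_1$--coordinate is moved to the adjacent intersection point on the same $\alpha$--curve, so that $\varepsilon(\tilde{\mathbf y}^1,\mathbf y^1)=\mu$ exactly compensates the basepoint change; after this the nearest point and the small triangle in $\Gamma_2$ are rigid translates of those in $\Gamma_1$, and both terms of (\ref{eq:Xi}) are literally unchanged --- no cancellation between the two differences is needed, since each difference is zero.

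Your closing ``conceptual shortcut'' does not repair this. Knowing that $\Xi^1$ and $\Xi^2$ are both sections of the filling map $\relspin(Y,K)\to\spin(Y_{m\mu+\lambda}(K))$ whose images lie in intervals of $\langle c_1(\xi),\widehat F\rangle$--values of the same length does not force them to agree: two such sections can differ by $\mu$ on part of the domain unless one shows the two intervals actually coincide, and pinning that down is essentially the lemma again. The fix is to replace ``take the same $\mathbf y$'' by the shifted representative described above, after which the rest of your local analysis collapses to the observation that the whole picture translates.
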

\begin{proof}
Without loss of generality, we may assume there is only one
intersection point of $\lambda\cap\gamma_1$ between $\beta^1_1$
and $\beta^2_1$. See Figure~\ref{fig:Xi} for an illustration.

Suppose $\mathbf y^1,\mathbf y^2\in\mathbb T_{\alpha}\cap\mathbb
T_{\gamma}$ are two intersection points supported in the winding
region, and suppose their $\gamma_1$--coordinates are $y^1,y^2$,
respectively. Assume $\mathfrak s_{w^1}(\mathbf y^1)=\mathfrak
s_{w^2}(\mathbf y^2)=\mathfrak t$, we want to prove that
$\Xi^1(\mathfrak t)=\Xi^2(\mathfrak t)$.

By \cite[Lemma~2.19]{OSzAnn1},
\begin{eqnarray*}
\mathfrak s_{w^1}(\mathbf y^1)-\mathfrak s_{w^1}(\mathbf
y^2)&=&\mathrm{PD}(\varepsilon(\mathbf y^2,\mathbf y^1)),\\
\mathfrak s_{w^2}(\mathbf y^2)-\mathfrak s_{w^1}(\mathbf
y^2)&=&\mathrm{PD}(\mu).
\end{eqnarray*}
Hence $\varepsilon(\mathbf y^2,\mathbf y^1)=\mu$. Let
$\tilde{\mathbf y}^1\in\mathbb T_{\alpha}\cap\mathbb T_{\gamma}$
be the point whose coordinates coincide with the coordinates of
$\mathbf y^1$, except that its $\gamma_1$--coordinate is the next
intersection point to $y^1$ on the same $\alpha$--curve, denoted
$\tilde y^1$. Then $\varepsilon(\tilde{\mathbf y}^1,\mathbf
y^1)=\mu$, so $\tilde{\mathbf y}^1$ is in the same equivalence
class as $\mathbf y^2$.

Now we only need to prove that
\begin{equation}\label{eq:XiShift}
\Xi^1(\mathfrak s_{w^1}(\mathbf y^1))=\Xi^2(\mathfrak
s_{w^2}(\tilde{\mathbf y}^1)).
\end{equation}
Let $\mathbf x^1\in\mathbb T_{\alpha}\cap\mathbb
T_{\beta^1},\tilde{\mathbf x}^1\in\mathbb T_{\alpha}\cap\mathbb
T_{\beta^2}$ be the nearest points to $\mathbf y^1,\tilde{\mathbf
y}^1$, respectively. It is clear that $\underline{\mathfrak
s}_{w^1,z^1}(\mathbf x^1)=\underline{\mathfrak
s}_{w^2,z^2}(\tilde{\mathbf x}^1)$. Moreover, the small triangle
for $\tilde{\mathbf y}^1$ in $\Gamma_2$ is just a translation of
the small triangle for ${\mathbf y}^1$ in $\Gamma_1$, so they
contribute the same $n_w(\psi)-n_z(\psi)$ term in (\ref{eq:Xi}).
So (\ref{eq:XiShift}) follows.
\end{proof}

\begin{figure}
\begin{picture}(340,120)
\put(0,0){\scalebox{0.64}{\includegraphics*[25pt,345pt][560pt,
530pt]{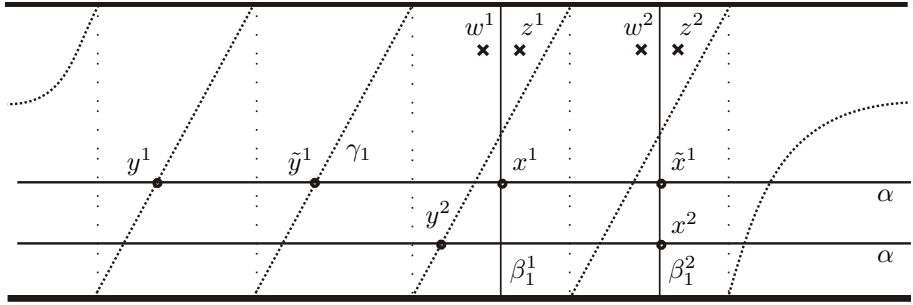}}}

\put(330,43){$\alpha$}

\put(330,20){$\alpha$}

\put(191,14){$\beta^1_1$}

\put(251,14){$\beta^2_1$}

\put(174,106){$w^1$}

\put(195,106){$z^1$}

\put(192,55){$x^1$}

\put(252,31){$x^2$}

\put(252,55){$\tilde x^1$}

\put(234,106){$w^2$}

\put(255,106){$z^2$}

\put(159,35){$y^2$}

\put(46,55){$y^1$}

\put(107,55){$\tilde y^1$}

\put(129,60){$\gamma_1$}

\end{picture}
\caption{Local picture of the two triple Heegaard
diagrams}\label{fig:Xi}
\end{figure}

\begin{rem}
In \cite{OSzRatSurg}, in order to define $\Xi(\mathfrak t)$, one
places the meridian in a position such that the equivalence class
of intersection points representing $\mathfrak t$ is supported in
the winding region. The above lemma removes this restriction.
\end{rem}

\begin{lem}\label{lem:XiSurj}
Suppose $\xi\in\relspin(Y,K)$. For all sufficiently large $m$,
there exists $\mathfrak t\in\spin(Y_{m\mu+\lambda}(K))$, such that
$\Xi(\mathfrak t)=\xi$.
\end{lem}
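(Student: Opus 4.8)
The plan is to combine the defining formula (\ref{eq:Xi}) for $\Xi$ with a direct count of intersection points in the winding region. Fix $\xi\in\relspin(Y,K)$, and let $\mathfrak s\in\spin(Y)$ be the $\mathrm{Spin}^c$ structure it induces after filling in a tubular neighborhood of $K$. Since the kernel of $H_1(Y-\ond(K))\to H_1(Y)$ is generated by the meridian $\mu$, the induced surjection $q\co\relspin(Y,K)\to\spin(Y)$ is simply the quotient by the action $\xi\mapsto\xi+k\mu$, $k\in\mathbb Z$, so the fiber $q^{-1}(\mathfrak s)$ is the single orbit $\{\xi+k\mu:k\in\mathbb Z\}$. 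Applying $q$ to (\ref{eq:Xi}) and using $q(\mu)=0$ shows that if $\mathfrak t$ is represented by a winding-region point $\mathbf y$ with nearest point $\mathbf x$, then the $\mathrm{Spin}^c$ structure on $Y$ underlying $\Xi(\mathfrak t)$ is $\mathfrak s_w(\mathbf x)$. Hence it suffices to produce, for all sufficiently large $m$, a point $\mathbf y\in\mathbb T_\alpha\cap\mathbb T_\gamma$ supported in the winding region whose nearest point $\mathbf x$ satisfies $\mathfrak s_w(\mathbf x)=\mathfrak s$ and whose small triangle $\psi$ satisfies $\bigl(n_w(\psi)-n_z(\psi)\bigr)\mu=\xi-\underline{\mathfrak s}_{w,z}(\mathbf x)$; the $\mathrm{Spin}^c$ structure $\mathfrak t$ on $Y_{m\mu+\lambda}(K)$ represented by such a $\mathbf y$ then has $\Xi(\mathfrak t)=\xi$ by (\ref{eq:Xi}).

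I would begin by arranging the Heegaard diagram for $(Y,K)$ so that, besides the knot-adapted structure of the construction, the $\mathrm{Spin}^c$ structure $\mathfrak s$ is realized by a point $\mathbf x_0\in\mathbb T_\alpha\cap\mathbb T_\beta$ that can serve as the nearest point of winding-region generators — which one can always achieve, if necessary after a small modification of the diagram away from the winding region that creates a cancelling pair in the desired structure, and without disturbing the meridian $\beta_1=\mu$. (If one takes it as known that $\Xi$ is independent of the diagram, there is no loss in choosing the diagram to depend on $\xi$.) Let $k_0\in\mathbb Z$ be defined by $\xi=\underline{\mathfrak s}_{w,z}(\mathbf x_0)+k_0\mu$. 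By the reduction above, it remains to exhibit, for all large $m$, a winding-region point $\mathbf y$ with nearest point $\mathbf x_0$ whose small triangle $\psi$ has $n_w(\psi)-n_z(\psi)=k_0$.

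This last step is the crux, and it is exactly the winding-region analysis of \cite[Section~4]{OSzRatSurg}. As $m$ grows, $\gamma_1=m\mu+\lambda$ spirals $m$ times through the winding region and meets the $\alpha$-curve carrying the $\gamma_1$-coordinate of $\mathbf x_0$ in a string of roughly $m$ intersection points, all of which have $\mathbf x_0$ as their nearest point; for two consecutive points of this string the domain of the associated small triangle changes by one loop of the spiral, and by the standard placement of $w$ and $z$ on opposite sides of that spiral this changes $n_w(\psi)-n_z(\psi)$ by exactly $\pm1$. Placing the meridian $\beta_1$ so that these loops accumulate on both sides of $\mathbf x_0$ — which is permitted, and which does not change $\Xi$ by the preceding lemma — the integers $n_w(\psi)-n_z(\psi)$ realized in this way fill an interval of consecutive integers of length comparable to $m$ that grows without bound in both directions, hence contains the fixed integer $k_0$ once $m$ is large enough. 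I expect the only real subtlety to be precisely this bookkeeping — making sure the interval spreads in both directions rather than drifting off to $+\infty$ — which is handled as in \cite{OSzRatSurg} by noting that the two strands of $\gamma_1$ leaving the winding region on opposite sides of $\mathbf x_0$ contribute small triangles whose $\bigl(n_w-n_z\bigr)$-values have opposite signs.
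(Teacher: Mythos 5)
Your proof is correct and follows essentially the same route as the paper's: realize the underlying $\mathrm{Spin}^c$ structure $\mathfrak s$ by an intersection point $\mathbf x_0$ of a suitably chosen diagram, write $\xi=\underline{\mathfrak s}_{w,z}(\mathbf x_0)+k_0\cdot\mu$, and note that the winding-region small triangles realize every integer value of $n_w-n_z$ in an interval of length about $m$. The paper compresses this last step into ``follows from the definition of $\Xi$''; your explicit bookkeeping, including the appeal to the preceding lemma to place the meridian so that the interval grows in both directions and thus eventually contains $k_0$, is precisely the content being elided there.
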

\begin{proof}
Let $\mathfrak s\in\spin(Y)$ be the underlying Spin$^c$ structure
of $\xi$. We can choose a Heegaard diagram for $(Y,K)$ such that
some $\mathbf x\in\mathbb T_{\alpha}\cap\mathbb T_{\beta}$
represents $\mathfrak s$, then $\xi=\underline{\mathfrak
s}_{w,z}(\mathbf x)+n\cdot\mu$ for some $n\in\mathbb Z$. Now our
desired result follows from the definition of $\Xi$.
\end{proof}

The following proposition is a part of
\cite[Theorem~4.1]{OSzRatSurg}.

\begin{prop}\label{prop:LargeSurg}
Let $K\subset Y$ be a rationally null-homologous knot in a closed,
oriented three-manifold, equipped with a framing $\lambda$. Let
$$\widehat A_{\xi}(Y,K
)=C_{\xi}\big\{\max\{i,j\}=0\big\},$$ where
$C_{\xi}=CFK^{\infty}(Y,K,\xi)$ as in \cite{OSzRatSurg}. Then, for
all sufficiently large $m$ and all $\mathfrak
t\in\mathrm{Spin}^c(Y_{m\mu+\lambda}(K))$, there is an isomorphism
$$\Psi_{\mathfrak t,m}\co \widehat{CF}(Y_{m\mu+\lambda}(K),\mathfrak t)
\to\widehat A_{\Xi(\mathfrak t)}(Y,K).$$
\end{prop}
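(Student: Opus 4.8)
The plan is to adapt the proof of the large surgery formula of Ozsv\'ath and Szab\'o \cite[Theorem~4.1]{OSzRatSurg}, which itself refines the construction of \cite[Section~4]{OSzKnot}. Since only the statement on the level of $\widehat{CF}$ is needed here (not the $CF^+$ or $CF^-$ refinements, and not naturality of the associated maps), the argument can be streamlined. I would work with the Heegaard triple diagram $(\Sigma,\boldsymbol\alpha,\boldsymbol\beta,\boldsymbol\gamma,w,z)$ set up above, with $\beta_1=\mu$, $\gamma_1=m\mu+\lambda$, and the remaining $\gamma_i$ small Hamiltonian translates of the $\beta_i$, arranged so that a single $\alpha$--curve crosses the winding region. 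There $\gamma_1$ spirals, producing a long chain of intersection points with that $\alpha$--curve; these are exactly the $\gamma_1$--coordinates of the points of $\mathbb T_\alpha\cap\mathbb T_\gamma$ supported in the winding region. To each such point $\mathbf y$ I attach its nearest point $\mathbf x\in\mathbb T_\alpha\cap\mathbb T_\beta$ and the small triangle $\psi\in\pi_2(\mathbf y,\Theta,\mathbf x)$ of Definition~\ref{defn:Xi}, together with the integers $(i,j)$ determined by the multiplicities $n_w(\psi),n_z(\psi)$, which specify a generator $[\mathbf x,i,j]$ of $CFK^\infty(Y,K,\Xi(\mathfrak t))$.

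The first step is to show that $\mathbf y\mapsto[\mathbf x,i,j]$ is a bijection from the set of generators of $\widehat{CF}(Y_{m\mu+\lambda}(K),\mathfrak t)$ onto the set of generators of $\widehat A_{\Xi(\mathfrak t)}(Y,K)=C_{\Xi(\mathfrak t)}\{\max\{i,j\}=0\}$, for all sufficiently large $m$. That every $\mathfrak t$ is represented by points supported in the winding region once $m$ is large is a version of Lemma~\ref{lem:XiSurj}. The constraint $\max\{i,j\}=0$ then comes from a direct count in the winding region, exactly as in \cite[Section~4]{OSzRatSurg}: as $\mathbf y$ runs over the winding chain its $\mathrm{Spin}^c$ structure $\mathfrak s_z(\mathbf y)$ shifts by multiples of $\mathrm{PD}(\mu)$ (compare the computation in the proof of the lemma above), so for a \emph{fixed} $\mathfrak t$ the surviving points trace out precisely the ``hook'' $\{\max\{i,j\}=0\}$ of $C_{\Xi(\mathfrak t)}$, provided $m$ exceeds a bound depending on the Seifert genus of $K$, so that this hook embeds into the winding chain for every $\mathfrak t$.

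The remaining, and main, step is to match the differentials. A class $\phi\in\pi_2(\mathbf y,\mathbf y')$ between winding points with $\mu(\phi)=1$ and $n_z(\phi)=0$ --- the kind counted by $\widehat\partial$ --- should correspond, for $m$ large, to a class $\phi'\in\pi_2(\mathbf x,\mathbf x')$ in $(\Sigma,\boldsymbol\alpha,\boldsymbol\beta)$ whose effect on the filtration is recorded by $(n_w(\phi'),n_z(\phi'))$, and the counts of rigid holomorphic representatives should agree; under the bijection above this identifies $\widehat\partial$ with the differential of $C_{\Xi(\mathfrak t)}\{\max\{i,j\}=0\}$. The hard part is the role of ``large $m$'': one must exclude holomorphic disks whose domains have positive multiplicity on the highly wound sub-region of $\Sigma$ near $\mu$, i.e.\ disks that ``wrap around'' the winding region. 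This is handled by the area estimate of \cite{OSzKnot,OSzRatSurg}: such a domain has area bounded below by a quantity growing with $m$, while admissibility of the diagram bounds the areas of index-one disks, so for $m$ sufficiently large these disks do not occur. I expect verifying this exclusion, uniformly over the finitely many $\mathrm{Spin}^c$ structures and in the rationally null-homologous setting, to be the only genuine difficulty; but it is precisely what is carried out in \cite[Theorem~4.1]{OSzRatSurg}, of which the present proposition is the $\widehat{CF}$--part.
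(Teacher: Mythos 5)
The paper offers no proof of this proposition: it is quoted verbatim as ``a part of'' \cite[Theorem~4.1]{OSzRatSurg}, so your sketch --- the nearest-point bijection on generators supported in the winding region, the $\max\{i,j\}=0$ constraint coming from the $\mathrm{Spin}^c$ count along the winding chain, and the exclusion for large $m$ of holomorphic disks crossing the winding region via area estimates --- is a faithful outline of the cited argument rather than an alternative to anything done in this paper. The one point worth tightening is that in Ozsv\'ath--Szab\'o's proof the isomorphism is not obtained by literally matching the two differentials under the generator bijection; rather, the chain map $\Psi_{\mathfrak t,m}$ defined by counting holomorphic triangles is shown to be upper-triangular with respect to an area (energy) filtration, with the nearest-point map as its diagonal part, and is therefore an isomorphism of chain complexes even though the off-diagonal terms need not vanish.
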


\subsection{Rational surgeries on null-homologous knots}

Let $K$ be a null-homologous knot in $Y$. As in
\cite[Section~7]{OSzRatSurg}, $Y_{\frac pq}(K)$ can be realized by
a Morse surgery with coefficient $a$ on the knot
$K'=K\#O_{q/r}\subset Y'=Y\#L(q,r)$, where $O_{q/r}$ is a
$U$--knot in $L(q,r)$, $p=aq+r$. Let
$$\Xi'\co\spin(Y'_{a\mu'+\lambda'})\to\relspin(Y',K')$$ be the map
defined in Definition~\ref{defn:Xi}.

\begin{figure}[!htbp]
\begin{center}
\begin{picture}(345,139)
\put(0,0){\scalebox{0.59}{\includegraphics*[5pt,425pt][585pt,
660pt]{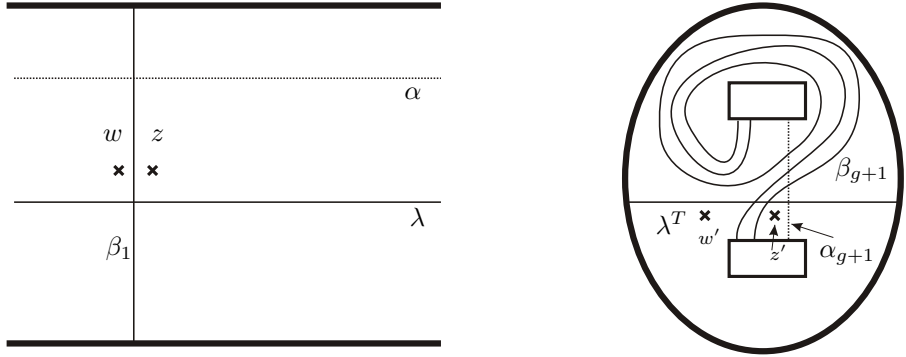}}}

\put(316,69){$\beta_{g+1}$}

\put(153,98){$\alpha$}

\put(40,39){$\beta_1$}

\put(39,82){$w$}

\put(57,82){$z$}

\put(264,43){$\scriptstyle w'$}

\put(291,37){$\scriptstyle z'$}

\put(310,38){$\alpha_{g+1}$}

\put(248,47){$\lambda^T$}

\put(155,50){$\lambda$}
\end{picture}
\caption{The left hand side is a piece of a Heegaard diagram for
$(Y,K)$. The right hand side is a genus $1$ Heegaard diagram for
$(L(q,r),O_{q/r})$. The boundary of the oval is capped off with a
disk, and the boundaries of the two rectangles are glued together
via a reflection. Here we choose $q=3,r=2$.}\label{fig:TwoKnots}
\end{center}
\end{figure}

\begin{construction}\label{constr:ConnSum}
Let
$$(\Sigma,\mbox{\boldmath${\alpha}$}=\{\alpha_1,\dots,\alpha_g\},
\mbox{\boldmath${\beta}$}=\{\beta_1,\dots,\beta_g\}, w,z)$$ be a
doubly-pointed Heegaard diagram for $(Y,K)$, such that $\beta_1$
is a meridian for $K$ and the two base points are induced from a
marked point on $\beta_1$. Suppose $\lambda\subset\Sigma$
represents a longitude of $K$.

Let $$(T,\{\alpha_{g+1}\},\{\beta_{g+1}\},w',z')$$ be a genus $1$
Heegaard diagram for $(L(q,r),O_{q/r})$. As in
Figure~\ref{fig:TwoKnots}, $\beta_{g+1}$ intersects $\alpha_{g+1}$
exactly $q$ times and intersects the boundary of each rectangle
exactly $r$ times. Suppose $\lambda^T\subset T$ represents a
longitude of $O_{q/r}$.

We perform the connected sum of $\Sigma$ and $T$ by identifying
the neighborhoods of $z$ and $w'$, hence we get a new genus
$(g+1)$ surface $\Sigma'$. Then
$$(\Sigma',\mbox{\boldmath${\alpha}$}'=\mbox{\boldmath${\alpha}$}\cup\{\alpha_{g+1}\},
\mbox{\boldmath${\beta}$}'=\mbox{\boldmath${\beta}$}\cup\{\beta_{g+1}\},w,z')$$
is a Heegaard diagram for $(Y',K')$. The longitude $\lambda'$ of
$K'$ is a connected sum of $\lambda$ and $\lambda^T$. \qed
\end{construction}

We define $$\Pi_1\co\relspin(Y',K')\to\relspin(Y,K)$$ as follows.
Given $\xi'\in\relspin(Y',K')$, suppose $\mathbf x'\in\mathbb
T_{\alpha'}\cap\mathbb T_{\beta'}$ represents the underlying
Spin$^c$ structure of $\xi'$, then
$$\xi'=\underline{\mathfrak s}_{w,z'}(\mathbf x')+n\cdot\mu'$$
for some $n\in\mathbb Z$. Now let $\mathbf x$ be the projection of
$\mathbf x'$ to $\mathbb T_{\alpha}\cap\mathbb T_{\beta}$, then
$$\Pi_1(\xi')=\underline{\mathfrak s}_{w,z}(\mathbf x)+n\cdot\mu.$$

The following proposition is obvious. (See also
\cite[Corollary~5.3]{OSzRatSurg}.)

\begin{prop}\label{prop:Kunneth}
For any $\xi'\in\relspin(Y',K')$, we have
$${CFK^{\infty}}(Y',K',\xi')\cong{CFK^{\infty}}(Y,K,\Pi_1(\xi'))$$
as $\mathbb Z\oplus\mathbb Z$--filtered chain complexes.
\end{prop}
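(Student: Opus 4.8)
The plan is to reduce Proposition~\ref{prop:Kunneth} to the standard K\"unneth formula for knot Floer homology under connected sums, together with the explicit combinatorics of Construction~\ref{constr:ConnSum}. First I would observe that the connected sum operation on doubly-pointed Heegaard diagrams used in Construction~\ref{constr:ConnSum} is precisely the one underlying the K\"unneth principle of \cite[Section~7]{OSzKnot} and \cite[Corollary~5.3]{OSzRatSurg}: gluing $\Sigma$ to $T$ along neighborhoods of $z$ and $w'$ makes the generators of $CFK^\infty(Y',K')$ canonically identified with pairs $(\mathbf x,\theta)$, where $\mathbf x\in\mathbb T_\alpha\cap\mathbb T_\beta$ and $\theta$ is the unique generator of $\widehat{CFK}$ of the genus-one diagram for $(L(q,r),O_{q/r})$ living in the relevant $\relspin$-class. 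Since $O_{q/r}$ is a $U$-knot, its full knot Floer complex $CFK^\infty(L(q,r),O_{q/r})$, restricted to any single $\relspin$ structure, is a one-dimensional complex supported in a single $(i,j)$-bifiltration level; hence tensoring with it induces only a global shift of the bifiltration and no change in the underlying complex up to filtered isomorphism.

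Next I would make the bookkeeping of $\relspin$ structures precise. The map $\Pi_1$ is defined exactly so that it matches the splitting $\relspin(Y',K')\to\relspin(Y,K)$ coming from projecting $\mathbf x'$ to its $\mathbb T_\alpha\cap\mathbb T_\beta$ component and carrying the $n\cdot\mu'$ coefficient to $n\cdot\mu$; I would verify that under the connected-sum identification of generators this is compatible with the decomposition $\underline{\mathfrak s}_{w,z'}(\mathbf x')=\underline{\mathfrak s}_{w,z}(\mathbf x)+(\text{contribution from }T)$, using \cite[Lemma~2.19]{OSzAnn1}-type additivity of $\underline{\mathfrak s}$ under connected sum and the fact that the $T$-side contributes a fixed $\relspin$ summand. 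Then for a fixed $\xi'$, every generator of $CFK^\infty(Y',K',\xi')$ is of the form $(\mathbf x,\theta)$ with $\mathbf x$ ranging exactly over the generators of $CFK^\infty(Y,K,\Pi_1(\xi'))$, giving the desired bijection on generators.

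Finally I would check that this bijection is a chain isomorphism respecting both filtrations. Differentials in the connected-sum diagram count holomorphic disks in $\mathrm{Sym}^{g+1}(\Sigma')$; by the standard stabilization/connected-sum argument (as in \cite[Section~7]{OSzKnot}), for a generic almost-complex structure these factor as a disk in $\mathrm{Sym}^g(\Sigma)$ together with a constant disk on the $T$-side (the $T$-side complex having trivial differential since it is one-dimensional in each $\relspin$-class), so the differential on $CFK^\infty(Y',K',\xi')$ is carried to that on $CFK^\infty(Y,K,\Pi_1(\xi'))$. The algebraic $(i,j)$-bifiltration levels transform by the constant shift coming from the fixed bifiltration level of $\theta$, so after normalizing we get an isomorphism of $\mathbb Z\oplus\mathbb Z$-filtered chain complexes. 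I expect the only genuinely delicate point to be the identification of the $\relspin$-bookkeeping — verifying that $\Pi_1$ as literally defined is the right map and that the $T$-side degree of freedom is absorbed correctly — whereas the holomorphic-disk factorization is entirely standard and I would simply cite \cite{OSzKnot,OSzRatSurg} for it.
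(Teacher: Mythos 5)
Your argument is correct and is exactly the reasoning the paper has in mind: the paper offers no proof, simply declaring the proposition obvious and citing the connected-sum K\"unneth principle of \cite[Corollary~5.3]{OSzRatSurg}, which together with the fact that $CFK^\infty(L(q,r),O_{q/r})$ is one-dimensional in each relative $\spin$ class is precisely your argument. Your extra care with the $\relspin$ bookkeeping under $\Pi_1$ is the only substantive content, and it checks out.
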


\begin{lem}\label{lem:piDecomp}
When $m$ is sufficiently large, we have
$$\pi=G_{Y,K}\circ\Pi_1\circ\Xi'.$$ Here $G_{Y,K}\co\relspin(Y,K)\to\spin(Y)$ is the map defined in
\cite[Section~2.2]{OSzRatSurg}.
\end{lem}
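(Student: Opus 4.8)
The plan is to chase through the definitions of the three maps $\Xi'$, $\Pi_1$, and $G_{Y,K}$ on the level of intersection points in the big triple Heegaard diagram, and to track how a $\mathrm{Spin}^c$ structure on $Y_{p/q}(K)$ is pushed forward under $\pi$. Recall that $Y_{p/q}(K)$ is realized as the Morse surgery $Y'_{a\mu'+\lambda'}(K')$ with $p=aq+r$, and that there is the natural identification $\mathrm{Spin}^c(Y_{p/q}(K))\cong\mathrm{Spin}^c(Y)\times\mathbb Z/p\mathbb Z$ used to define $\pi$. First I would fix a $\mathrm{Spin}^c$ structure $\mathfrak t\in\mathrm{Spin}^c(Y'_{a\mu'+\lambda'}(K'))$ and, using Construction~\ref{constr:ConnSum}, represent it by an intersection point $\mathbf y$ supported in the winding region of the large-surgery diagram built on $\Sigma'$. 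Applying the definition of $\Xi'$ from Definition~\ref{defn:Xi} writes $\Xi'(\mathfrak t)=\underline{\mathfrak s}_{w,z'}(\mathbf x')+\bigl(n_w(\psi)-n_{z'}(\psi)\bigr)\cdot\mu'$ where $\mathbf x'$ is the nearest point in $\mathbb T_{\alpha'}\cap\mathbb T_{\beta'}$ and $\psi$ the small triangle.

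Next I would apply $\Pi_1$: writing $\mathbf x$ for the projection of $\mathbf x'$ to $\mathbb T_\alpha\cap\mathbb T_\beta$ (dropping the $\alpha_{g+1},\beta_{g+1}$ coordinates coming from the $L(q,r)$ piece), the definition gives $\Pi_1\circ\Xi'(\mathfrak t)=\underline{\mathfrak s}_{w,z}(\mathbf x)+\bigl(n_w(\psi)-n_{z'}(\psi)\bigr)\cdot\mu$. Then I would apply $G_{Y,K}$, which by its definition in \cite[Section~2.2]{OSzRatSurg} sends $\underline{\mathfrak s}_{w,z}(\mathbf x)+n\cdot\mu$ to $\mathfrak s_w(\mathbf x)$ (it forgets the $z$-basepoint data and the $\mu$-multiple). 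So the composite equals $\mathfrak s_w(\mathbf x)\in\mathrm{Spin}^c(Y)$. On the other side, $\pi(\mathfrak t)$ is by construction the first-factor component of $\mathfrak t$ under the splitting $\mathrm{Spin}^c(Y_{p/q}(K))\cong\mathrm{Spin}^c(Y)\times\mathbb Z/p\mathbb Z$, which one identifies with $\mathfrak s_w$ of the corresponding point of $\mathbb T_\alpha\cap\mathbb T_\beta$ via the connected-sum decomposition and the large-$m$ stabilization; here is where I would invoke Proposition~\ref{prop:Kunneth} to match the $\mathrm{Spin}^c$ bookkeeping across the connected sum $Y'=Y\#L(q,r)$, noting that $L(q,r)$ is a rational homology sphere so the first factor is unchanged. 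Thus both sides equal $\mathfrak s_w(\mathbf x)$, and we need $m$ large enough (via Lemma~\ref{lem:XiSurj} and Proposition~\ref{prop:LargeSurg}) that every $\mathfrak t$ is represented by a point supported in the winding region so that all the small-triangle identifications are valid.

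The main obstacle I anticipate is keeping the various $\mathrm{Spin}^c$ identifications consistent — in particular, verifying that the first-factor projection $\pi$, which is defined abstractly via the splitting $\mathrm{Spin}^c(Y_{p/q}(K))\cong\mathrm{Spin}^c(Y)\times\mathbb Z/p\mathbb Z$, genuinely agrees with "take $\mathfrak s_w$ of the underlying intersection point." This requires unwinding how the connected sum with $L(q,r)$ and the Morse surgery interact with Ozsv\'ath–Szab\'o's affine-space structures, and checking that the basepoint moves in Construction~\ref{constr:ConnSum} (identifying neighborhoods of $z$ and $w'$) do not shift the first factor. Once this compatibility is pinned down — essentially a diagram chase using \cite[Section~2.2, Section~7]{OSzRatSurg} together with Proposition~\ref{prop:Kunneth} — the identity $\pi=G_{Y,K}\circ\Pi_1\circ\Xi'$ follows formally. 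I would also remark that since all maps involved are affine-linear over the relevant homology groups, it suffices to check the identity on a single $\mathrm{Spin}^c$ structure in each $\pi$-fiber plus compatibility with the $\mathbb Z/p$-action, which keeps the verification short.
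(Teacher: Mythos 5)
Your computation of the right-hand side, $G_{Y,K}\circ\Pi_1\circ\Xi'(\mathfrak t)=\mathfrak s_w(\mathbf x)$, matches the paper's and is fine. But the heart of the lemma is exactly the step you flag as "the main obstacle" and then defer to an unspecified "diagram chase": showing that $\pi(\mathfrak t)$ --- defined via the abstract splitting $\mathrm{Spin}^c(Y_{p/q}(K))\cong\mathrm{Spin}^c(Y)\times\mathbb Z/p\mathbb Z$ --- really equals $\mathfrak s_w(\mathbf x)$ for the same point $\mathbf x$. Acknowledging that this needs to be checked is not the same as checking it, so as written the proposal has a genuine gap precisely where the lemma has content. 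Moreover, the tool you propose to close it, Proposition~\ref{prop:Kunneth}, is an isomorphism of filtered knot Floer complexes and says nothing about how the $\mathrm{Spin}^c$ projection $\pi$ interacts with the connected-sum/Morse-surgery description; it cannot do the bookkeeping you want.

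What the paper actually does at this point is concrete and geometric: starting from the diagram of Construction~\ref{constr:ConnSum}, it slides $\beta_{g+1}$ over $\beta_1$ $r$ times, forms $\gamma_1=a\beta_1+\lambda'$, slides the resulting curve over $\gamma_1$ $q$ times, and then destabilizes to land on a diagram on the original surface $\Sigma$ whose new curve $\gamma_{g+1}^*$ is shown (by an intersection-number count) to be $p\mu+q\lambda$ --- i.e.\ the standard $p/q$-surgery diagram through which $\pi$ is computed. It then tracks the winding-region representative $\mathbf y'$ of $\mathfrak t$ through these handleslides and the destabilization to a point $\mathbf y^*$ whose nearest point in $\mathbb T_\alpha\cap\mathbb T_\beta$ is $\mathbf x$, which is what forces $\pi(\mathfrak t)=\mathfrak s_w(\mathbf x)$. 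Your closing remark about affine-linearity over the $\mathbb Z/p$-action is a reasonable economy, but it still requires pinning down the identification on at least one point per fiber, which again needs the explicit sequence of Heegaard moves. To complete the proof you would need to supply this handleslide--destabilization argument (or an equivalent identification of the two surgery descriptions at the level of pointed Heegaard diagrams).
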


\begin{figure}[!htbp]
\begin{center}
\begin{picture}(345,139)
\put(0,0){\scalebox{0.59}{\includegraphics*[5pt,425pt][585pt,
660pt]{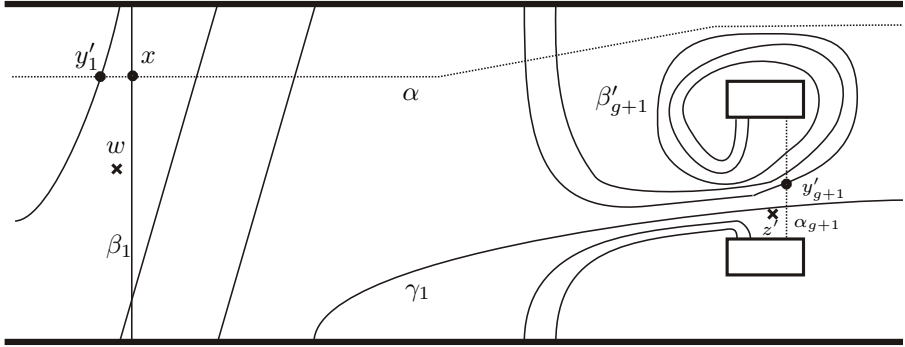}}}

\put(154,23){$\gamma_{1}$}

\put(153,97){$\alpha$}

\put(41,39){$\beta_1$}

\put(41,77){$w$}

\put(29,111){$y'_1$}

\put(304,62){$\scriptstyle y'_{g+1}$}

\put(289,46){$\scriptstyle z'$}

\put(301,49){$\scriptstyle \alpha_{g+1}$}

\put(54,111){$x$}

\put(226,95){$\beta'_{g+1}$}
\end{picture}
\caption{A Heegaard diagram for $Y'_{a\mu'+\lambda'}(K')$. Here we
choose $a=3$.}\label{fig:HS1}
\end{center}
\end{figure}

\begin{proof}
We follow the notation in Construction~\ref{constr:ConnSum}. Since
$\lambda'$ intersects $\beta_1$ exactly once, we can slide
$\beta_{g+1}$ over $\beta_1$ $r$ times to eliminate the
intersection points in $\beta_{g+1}\cap\lambda'$. The new curve is
denoted $\beta'_{g+1}$ as in Figure~\ref{fig:HS1}. Then
$$(\Sigma',\mbox{\boldmath${\alpha}$}',
\mbox{\boldmath${\beta}$}''=\mbox{\boldmath${\beta}$}\cup\{\beta'_{g+1}\},w,z')$$
is also a Heegaard diagram for $(Y',K')$. Let
$\gamma_{1}=a\beta_1+\lambda'$, then
$$(\Sigma',\mbox{\boldmath${\alpha}$}',
\mbox{\boldmath${\gamma}$}_1=\{\gamma_1,\beta_2,\dots,\beta_g,\beta'_{g+1}\},
w)$$ is a Heegaard diagram for $Y'_{a\mu'+\lambda'}(K')$.

\begin{figure}[!htbp]
\begin{center}
\begin{picture}(345,139)
\put(0,0){\scalebox{0.59}{\includegraphics*[5pt,425pt][585pt,
660pt]{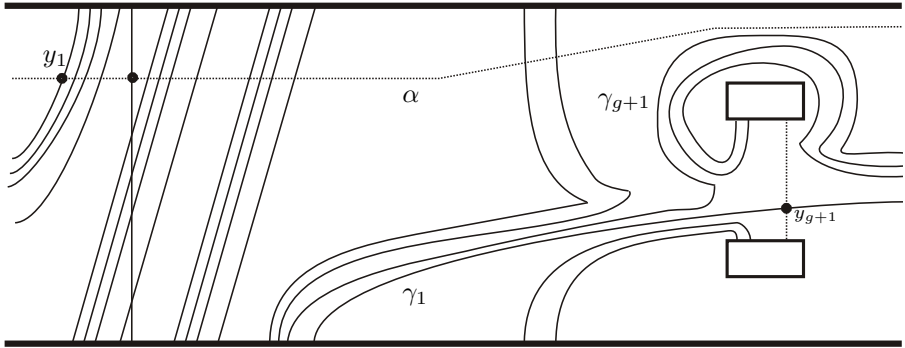}}}

\put(227,96){$\gamma_{g+1}$}

\put(153,22){$\gamma_{1}$}

\put(153,97){$\alpha$}

\put(17,112){$y_1$}

\put(301,53){$\scriptstyle y_{g+1}$}
\end{picture}
\caption{After $q$ handleslides, we get a Heegaard diagram for
$Y_{p/q}(K)$.}\label{fig:HS2}
\end{center}
\end{figure}

The curve $\alpha_{g+1}$ intersects $\gamma_{1}$ exactly once. We
can slide $\beta'_{g+1}$ over $\gamma_{1}$ $q$ times to eliminate
its $q$ intersection points with $\alpha_{g+1}$. The new curve is
denoted $\gamma_{g+1}$ as in Figure~\ref{fig:HS2}.  Now
$$(\Sigma',\mbox{\boldmath${\alpha}$}',
\mbox{\boldmath${\gamma}$}_2=\{\gamma_1,\beta_2,\dots,\beta_g,\gamma_{g+1}\},
w)$$ is a Heegaard diagram for
$Y'_{a\mu'+\lambda'}(K')=Y_{p/q}(K)$. Moreover, we may slide other
$\alpha$--curves over $\alpha_{g+1}$ to eliminate their
intersection points with $\gamma_{1}$. A destabilization will
remove $\alpha_{g+1}$ and $\gamma_1$. Now we get a diagram
$$(\Sigma^*,\mbox{\boldmath${\alpha}$}^*,
\mbox{\boldmath${\gamma}$}^*, w)$$
 which is isomorphic to
$$(\Sigma,\mbox{\boldmath${\alpha}$},
\{\beta_2,\dots,\beta_g,\gamma_{g+1}^*\}, w),$$ where
$\gamma_{g+1}^*$ is the image of $\gamma_{g+1}$ under the
destabilization.

We want to show that $\gamma_{g+1}^*$ is isotopic to
$p\mu+q\lambda$, the curve obtained by doing cut-and-pastes to $p$
parallel copies of $\mu$ and $q$ parallel copies of $\lambda$. In
fact, $\gamma_{g+1}^*$ is supported in a small neighborhood of
$\mu\cup\lambda$, so it must be isotopic to $p'\mu+q'\lambda$ for
some $p',q'$. It is easy to compute the intersection numbers of
$\gamma_{g+1}$ with $\lambda$ and $\mu=\beta_1$, which are
$p=aq+r$ and $q$. The intersection numbers of $\gamma_{g+1}^*$
with $\mu$ and $\lambda$ remains the same, so
$\gamma_{g+1}^*=p\mu+q\lambda$.

Suppose $\mathfrak t\in\spin(Y_{p/q}(K))$. We want to prove
\begin{equation}\label{eq:piFactor}
\pi(\mathfrak t)=G_{Y,K}\circ\Pi_1\circ\Xi'(\mathfrak t).
\end{equation}

We first consider the right hand side of (\ref{eq:piFactor}). Let
$\mathbf y'$ be a point in $\mathbb T_{\alpha'}\cap\mathbb
T_{\gamma_1}$ which is supported in the winding region and
represents $\mathfrak t$ (Figure~\ref{fig:HS1}). Suppose the
$\gamma_{1}$--coordinate of $\mathbf y'$ is $y'_1$ and the
$\beta'_{g+1}$--coordinate is $y'_{g+1}$.

Let $\mathbf x'\in \mathbb T_{\alpha'}\cap\mathbb T_{\beta''}$ be
the nearest point to $\mathbf y'$, then (\ref{eq:Xi}) implies that
$$\Xi'(\mathfrak t)=\underline{\mathfrak s}_{w,z'}(\mathbf
x')+n\cdot\mu'$$ for some $n\in\mathbb Z$. Let $\mathbf x$ be the
projection of $\mathbf x'$ to $\mathbb T_{\alpha}\cap\mathbb
T_{\beta}$, then $$\Pi_1\circ\Xi'(\mathfrak
t)=\underline{\mathfrak s}_{w,z}(\mathbf x)+n\cdot\mu.$$ Hence
$$G_{Y,K}\circ\Pi_1\circ\Xi'(\mathfrak t)=\mathfrak s_w(\mathbf x).$$

Now we consider the left hand side of (\ref{eq:piFactor}). As in
Figure~\ref{fig:HS2}, we get another Heegaard diagram for
$Y_{p/q}(K)$ by $q$ handle slides. In this diagram, we can find a
point $\mathbf y\in\mathbb T_{\alpha'}\cap\mathbb T_{\gamma_2}$
which represents $\mathfrak t$ as $\mathbf y'$ does. In fact,
since $\alpha_{g+1}$ intersects $\gamma_{1}$ exactly once and is
disjoint from other $\gamma$--curves, $\mathbf y$ must contain the
intersection point of $\alpha_{g+1}$ and $\gamma_{1}$, denoted
$y_{g+1}$. The $\gamma_1$--coordinate of $\mathbf y$, called
$y_1$, is determined by $y'_1$ and $y'_{g+1}$: it is one of the
$q$ intersection points on $\gamma_{g+1}$ near $y'_1$, and the
choice among these $q$ points is specified by the position of
$y'_{g+1}$. Other coordinates of $\mathbf y$ are the same as
$\mathbf y'$.

After handleslides and one destabilization, we get a point
$\mathbf y^*\in\mathbb T_{\alpha^*}\cap\mathbb T_{\gamma^*}$ whose
coordinates are the same as $\mathbf x$ except that its
$\gamma_1$--coordinate is $y_1$. So its nearest point in $\mathbb
T_{\alpha}\cap\mathbb T_{\beta}$ is $\mathbf x$, hence $\mathbf x$
represents $\pi(\mathfrak t)$. This proves (\ref{eq:piFactor}).
\end{proof}

\begin{lem}\label{lem:Ambient} Let
$H(\widehat{A}_{\xi}(Y,K))$ be the homology of the chain complex
$\widehat{A}_{\xi}(Y,K)$. For a fixed $\xi$, when $|n|\gg0$,
$$H(\widehat{A}_{\xi+n\cdot\mu}(Y,K))\cong\widehat{HF}(Y, G_{Y,K}(\xi)).$$
\end{lem}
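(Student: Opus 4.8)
The plan is to identify $\widehat{A}_{\xi+n\cdot\mu}(Y,K)$, for $|n|$ large, with the hat Heegaard Floer complex of $Y$ in the appropriate Spin$^c$ structure, by combining the large surgery formula with a stabilization argument. First I would recall that for the null-homologous knot $K$, large integral surgeries $Y_N(K)$ (with $N\gg 0$) on $K$ are themselves null-homologous-type surgeries, and $\mathrm{Spin}^c(Y_N(K))\cong\mathrm{Spin}^c(Y)\times\mathbb Z/N\mathbb Z$; Proposition~\ref{prop:LargeSurg} gives, for $m=N$ sufficiently large, an isomorphism $\Psi_{\mathfrak t,N}\co\widehat{CF}(Y_{N\mu+\lambda}(K),\mathfrak t)\to\widehat{A}_{\Xi(\mathfrak t)}(Y,K)$. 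So the homology $H(\widehat{A}_\xi(Y,K))$ is computed by $\widehat{HF}(Y_N(K),\mathfrak t)$ for any $\mathfrak t$ with $\Xi(\mathfrak t)=\xi$.

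The key point is then to understand how $\Xi$ interacts with the two decorations on the right-hand side: the underlying Spin$^c$ structure $G_{Y,K}(\xi)\in\mathrm{Spin}^c(Y)$ and the integer ``$\mu$-shift.'' From the defining formula \eqref{eq:Xi}, adding a copy of $\mu$ to $\Xi(\mathfrak t)$ corresponds to moving $\mathbf y$ within the winding region (equivalently, changing which of the $N$ Spin$^c$ structures over $G_{Y,K}(\xi)$ one picks), and when $|n|$ is large relative to the (fixed, finite) support of $CFK^\infty(Y,K,\xi)$ in the $j$-coordinate, the complex $\widehat{A}_{\xi+n\cdot\mu}(Y,K)=C_{\xi+n\cdot\mu}\{\max\{i,j\}=0\}$ degenerates: for $n\gg0$ the constraint $\max\{i,j\}=0$ forces $j=0$ and $i\le0$, so $\widehat A_{\xi+n\cdot\mu}$ becomes (filtered-quasi-isomorphic to) the subquotient complex $C_{\xi}\{i=0\}$, which is exactly $\widehat{CF}(Y, G_{Y,K}(\xi))$; symmetrically for $n\ll0$ one gets $C_{\xi}\{j=0\}$, also computing $\widehat{HF}(Y,G_{Y,K}(\xi))$. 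So I would make this ``stabilization of the $A$-complex'' explicit: show that the vertical (or horizontal) differential in $C_{\xi+n\cdot\mu}\{\max\{i,j\}=0\}$ coming from the non-degenerate direction is eventually an acyclic piece, so that the homology stabilizes to $H(C_{\xi}\{i=0\})=\widehat{HF}(Y,G_{Y,K}(\xi))$.

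The main obstacle I expect is bookkeeping the relationship between the $\mathbb Z\oplus\mathbb Z$-filtration grading on $CFK^\infty(Y,K,\xi)$ and the relative Spin$^c$ label $\xi+n\cdot\mu$: one must check that ``shifting $\xi$ by $n\cdot\mu$'' really translates into shifting the Alexander/$j$-grading by $n$ (up to the usual normalization) so that the large-$|n|$ degeneration is genuine, and that Proposition~\ref{prop:LargeSurg}'s hypothesis ``$m$ sufficiently large'' is compatible with ``$|n|$ sufficiently large'' — i.e., that for the fixed $\xi$ there is a single threshold beyond which both the surgery formula and the complex degeneration hold simultaneously. Once that is pinned down, the chain of identifications
$$
H(\widehat{A}_{\xi+n\cdot\mu}(Y,K))\;\cong\;\widehat{HF}(Y_{N}(K),\mathfrak t)\;\cong\;H\big(C_{\xi}\{i=0\}\big)\;\cong\;\widehat{HF}(Y,G_{Y,K}(\xi))
$$
gives the claim, using also Proposition~\ref{prop:Kunneth} / the connected-sum picture if one prefers to run the argument for $K'\subset Y'$ and then descend via $\Pi_1$. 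I would present the degeneration of $\widehat A_{\xi+n\cdot\mu}$ as the heart of the proof and treat the surgery-formula input as a black box cited from Proposition~\ref{prop:LargeSurg}.
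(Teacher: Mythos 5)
Your second paragraph is exactly the paper's argument: write $\widehat{A}_{\xi+n\cdot\mu}(Y,K)=C_{\xi}\{\max\{i,j-n\}=0\}$, note that the adjunction inequality kills $H(C_\xi\{i,j\})$ for $|i-j|\gg0$ so the complex degenerates to $C_\xi\{i=0\}$ (resp.\ $C_\xi\{j=0\}$) for $n\gg0$ (resp.\ $n\ll0$), and identify these with $\widehat{HF}(Y,G_{Y,K}(\xi))$ via \cite[Proposition~3.2]{OSzRatSurg}, using $G_{Y,K}=G_{Y,-K}$ for a null-homologous knot in the $n\ll0$ case. The detour through Proposition~\ref{prop:LargeSurg} in your first paragraph is unnecessary (the lemma is purely about the filtered complex, and in fact feeds \emph{into} the surgery formula rather than following from it), and the boundedness you need is of the support in $i-j$, not in $j$ alone, but the heart of your proof matches the paper's.
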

\begin{proof}
By the definitions \begin{eqnarray*}
\widehat{A}_{\xi+n\cdot\mu}(Y,K)&=&C_{\xi+n\cdot\mu}\left\{\max\{i,j\}=0\right\}\\
&=&C_{\xi}\left\{\max\{i,j-n\}=0\right\}.
\end{eqnarray*}
By the adjunction inequality, $H(C_{\xi}\{i,j\})=0$ when
$|i-j|\gg0$. So $$H(C_{\xi}\left\{\max\{i,j-n\}=0\right\})\cong
H(C_{\xi}\{i=0\})$$ when $n\gg0$. The latter group is isomorphic
to $\widehat{HF}(Y, G_{Y,K}(\xi))$ by
\cite[Proposition~3.2]{OSzRatSurg}.

When $n\ll0$, we have
$$H(C_{\xi}\left\{\max\{i,j-n\}=0\right\})\cong
H(C_{\xi}\{j=n\})\cong H(C_{\xi}\{j=0\}),$$ which is isomorphic to
$\widehat{HF}(Y, G_{Y,-K}(\xi))$ by
\cite[Proposition~3.2]{OSzRatSurg}. Now by \cite[Equation
(4)]{OSzRatSurg} and the fact that $K$ is null-homologous, we have
$G_{Y,K}(\xi)=G_{Y,-K}(\xi)$.
\end{proof}

\begin{lem}\label{lem:FinNonzero}
Suppose $\widehat{HF}(Y,\mathfrak s)=0$, then
$H(\widehat{A}_{\xi'}(Y',K'))\ne0$ for only finitely many
$\xi'\in(G_{Y,K}\circ\Pi_1)^{-1}(\mathfrak s)$.
\end{lem}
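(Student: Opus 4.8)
The plan is to reduce the statement to a finiteness property of the complexes $\widehat A_{\xi}(Y,K)$ as $\xi$ varies within a fixed $G_{Y,K}$--fiber. First I would use Proposition~\ref{prop:Kunneth}: since $CFK^{\infty}(Y',K',\xi')\cong CFK^{\infty}(Y,K,\Pi_1(\xi'))$ as $\mathbb Z\oplus\mathbb Z$--filtered complexes, the same isomorphism passes to the subquotient complexes defining $\widehat A$, so $H(\widehat A_{\xi'}(Y',K'))\cong H(\widehat A_{\Pi_1(\xi')}(Y,K))$. Hence it suffices to show that among all $\xi\in G_{Y,K}^{-1}(\mathfrak s)$ in the image of $\Pi_1$, only finitely many give nonzero $H(\widehat A_{\xi}(Y,K))$; and since $\Pi_1$ has fibers of bounded nature and $\relspin(Y,K)$ over a fixed underlying $\spin$ structure is an affine copy of $\mathbb Z$ via $\xi\mapsto\xi+n\cdot\mu$, this is really a statement about the single orbit $\{\xi_0+n\cdot\mu\}_{n\in\mathbb Z}$ for a fixed $\xi_0$ with $G_{Y,K}(\xi_0)=\mathfrak s$.

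Next I would invoke Lemma~\ref{lem:Ambient}: for fixed $\xi_0$, when $|n|\gg0$ we have $H(\widehat A_{\xi_0+n\cdot\mu}(Y,K))\cong\widehat{HF}(Y,G_{Y,K}(\xi_0))=\widehat{HF}(Y,\mathfrak s)$, which vanishes by hypothesis. Therefore $H(\widehat A_{\xi_0+n\cdot\mu}(Y,K))$ can be nonzero only for finitely many $n$, namely those in some bounded window $|n|\le N(\xi_0)$. I would then note that $G_{Y,K}^{-1}(\mathfrak s)$ decomposes into finitely many such $\mu$--orbits — indeed, since $G_{Y,K}$ records the image of $\xi$ under filling in the meridian and $K$ is null-homologous, the fiber over $\mathfrak s$ is exactly one orbit $\{\xi_0+n\cdot\mu\}$ (or in any case a controlled finite union) — so the total count of $\xi$ with $H(\widehat A_{\xi}(Y,K))\ne0$ lying over $\mathfrak s$ is finite. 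Pulling back through $\Pi_1$ (whose relevant fibers are governed by the $L(q,r)$ factor and hence also finite), we conclude $H(\widehat A_{\xi'}(Y',K'))\ne0$ for only finitely many $\xi'\in(G_{Y,K}\circ\Pi_1)^{-1}(\mathfrak s)$.

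The main obstacle, I expect, is bookkeeping the precise relationship between the two parametrizations: one must check that the identification $\relspin(Y',K')\cong\relspin(Y,K)$ coming from $\Pi_1$ is compatible with the $\mu$--action and with $G_{Y,K}$ in the sense that $G_{Y,K}\circ\Pi_1$ has finite fibers over each $\mathfrak s\in\spin(Y)$, equivalently that $\Pi_1$ restricted to a single $\mu'$--orbit over a fixed $\spin$ structure of $Y'$ maps bijectively (or finite-to-one) onto a $\mu$--orbit of $\relspin(Y,K)$. This is essentially contained in the Künneth-type statement, together with the observation that $\spin(Y')\cong\spin(Y)\times\spin(L(q,r))$ and $\spin(L(q,r))$ is finite, so that only finitely many underlying $\spin$ structures of $Y'$ lie over $\mathfrak s$; within each, Lemma~\ref{lem:Ambient} controls the orbit. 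Once this compatibility is spelled out the finiteness is immediate, so the real content is assembling Propositions~\ref{prop:Kunneth}, the structure of $G_{Y,K}$, and Lemma~\ref{lem:Ambient} into a single counting argument.
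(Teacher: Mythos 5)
Your proposal is correct and follows essentially the same route as the paper: reduce via Proposition~\ref{prop:Kunneth} and the $q$--to--one fibers of $\Pi_1$ to the statement over $G_{Y,K}^{-1}(\mathfrak s)$, observe that this fiber is the single $\mu$--orbit $\{\xi+i\cdot\mu\}_{i\in\mathbb Z}$, and apply Lemma~\ref{lem:Ambient} together with the hypothesis $\widehat{HF}(Y,\mathfrak s)=0$ to kill all but finitely many terms. The only difference is your hedging about whether the fiber is one orbit or a finite union; the paper states it is exactly one orbit, and either way the argument closes.
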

\begin{proof}
For each $\xi\in\relspin(Y,K)$, there are exactly $q$ relative
Spin$^c$ structures in $\Pi_1^{-1}(\xi)$. Moreover, by
Proposition~\ref{prop:Kunneth}, if $\xi'\in\Pi_1^{-1}(\xi)$, then
$$\widehat{A}_{\xi'}(Y',K')\cong\widehat{A}_{\xi}(Y,K).$$ Hence we
only need to show that $H(\widehat{A}_{\xi}(Y,K))\ne0$ for only
finitely many $\xi\in G_{Y,K}^{-1}(\mathfrak s)$.

Pick any $\xi\in G_{Y,K}^{-1}(\mathfrak s)$, then
$$G_{Y,K}^{-1}(\mathfrak s)=\{\xi+i\cdot\mu|\:i\in\mathbb Z\}.$$
By Lemma~\ref{lem:Ambient}, $H(\widehat{A}_{\xi+i\cdot\mu}(Y,K))$
is isomorphic to $\widehat{HF}(Y,\mathfrak s)$ when $|i|$ is
large, hence is $0$. This finishes the proof.
\end{proof}

\begin{prop}\label{prop:qCLarge}
When $m$ is sufficiently large,
\begin{eqnarray*}
\widehat{HF}(Y'_{m\mu'+\lambda'}(K'),\pi^{-1}(\mathfrak s))&\cong&
\bigoplus_{\{\xi'|\:G_{Y,K}\circ\Pi_1(\xi')=\mathfrak
s\}}H(\widehat{A}_{\xi'}(Y',K'))\\
&\cong&\bigoplus^q\bigoplus_{\{\xi|\:G_{Y,K}(\xi)=\mathfrak
s\}}H(\widehat{A}_{\xi}(Y,K)).
\end{eqnarray*}
\end{prop}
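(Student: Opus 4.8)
The plan is to combine the large-surgery identification of Proposition~\ref{prop:LargeSurg}, applied to the knot $K'\subset Y'$, with Lemma~\ref{lem:piDecomp}, the finiteness statement of Lemma~\ref{lem:FinNonzero}, the isomorphism of Proposition~\ref{prop:Kunneth}, and the fact that each fiber of $\Pi_1$ consists of exactly $q$ elements. Throughout, the relevant hypothesis --- implicit in the statement, and already needed in Lemma~\ref{lem:FinNonzero} --- is that $\widehat{HF}(Y,\mathfrak s)=0$; without it the right-hand side would be an infinite direct sum.

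First I would apply Proposition~\ref{prop:LargeSurg} to $K'\subset Y'$ with framing $\lambda'$: for all $m\gg0$ and every $\mathfrak t\in\spin(Y'_{m\mu'+\lambda'}(K'))$ there is an isomorphism $\widehat{CF}(Y'_{m\mu'+\lambda'}(K'),\mathfrak t)\cong\widehat A_{\Xi'(\mathfrak t)}(Y',K')$, hence on homology $\widehat{HF}(Y'_{m\mu'+\lambda'}(K'),\mathfrak t)\cong H(\widehat A_{\Xi'(\mathfrak t)}(Y',K'))$. Summing over the fiber $\pi^{-1}(\mathfrak s)$, and using Lemma~\ref{lem:piDecomp} --- so that for $m$ large $\mathfrak t\in\pi^{-1}(\mathfrak s)$ exactly when $\Xi'(\mathfrak t)\in(G_{Y,K}\circ\Pi_1)^{-1}(\mathfrak s)$ --- gives
$$\widehat{HF}(Y'_{m\mu'+\lambda'}(K'),\pi^{-1}(\mathfrak s))\;\cong\;\bigoplus_{\mathfrak t\in\pi^{-1}(\mathfrak s)}H\big(\widehat A_{\Xi'(\mathfrak t)}(Y',K')\big).$$

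Next I would replace the index set $\pi^{-1}(\mathfrak s)$ by $\{\xi'\mid G_{Y,K}\circ\Pi_1(\xi')=\mathfrak s\}$. By Lemma~\ref{lem:FinNonzero} the set $S=\{\xi'\in(G_{Y,K}\circ\Pi_1)^{-1}(\mathfrak s)\mid H(\widehat A_{\xi'}(Y',K'))\ne 0\}$ is finite and independent of $m$, so the summands with $\Xi'(\mathfrak t)\notin S$ vanish and the apparently-infinite direct sum on the right-hand side of the proposition reduces to the finite sum over $S$. It therefore suffices to show that, for $m$ large, $\Xi'$ restricts to a bijection from $(\Xi')^{-1}(S)$ onto $S$; note $(\Xi')^{-1}(S)\subseteq\pi^{-1}(\mathfrak s)$ automatically, since $\pi=G_{Y,K}\circ\Pi_1\circ\Xi'$. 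Surjectivity onto $S$ is immediate from Lemma~\ref{lem:XiSurj}. Injectivity is the one genuine point: as in the lemma following Definition~\ref{defn:Xi}, replacing the $\gamma_1$--coordinate of a point supported in the winding region by the adjacent intersection point on the same $\alpha$--curve changes the value of $\Xi'$ by $\mu'$, so any two distinct $\mathfrak t_1,\mathfrak t_2\in\pi^{-1}(\mathfrak s)$ with the same nearest point have $\Xi'(\mathfrak t_1)-\Xi'(\mathfrak t_2)$ equal to a nonzero multiple of $\mu'$; since the winding region acquires $\sim m$ such intersection points as $m$ grows, for $m$ large enough (a bound depending only on the fixed finite set $S$) the map $\Xi'$ is injective on $(\Xi')^{-1}(S)$. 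Making this separation argument precise is the step I expect to require the most care.

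Finally, for the second isomorphism I would regroup the resulting finite sum over $\xi'$ according to $\xi=\Pi_1(\xi')\in\relspin(Y,K)$. As observed in the proof of Lemma~\ref{lem:FinNonzero}, each fiber $\Pi_1^{-1}(\xi)$ has exactly $q$ elements, the condition $G_{Y,K}(\Pi_1(\xi'))=\mathfrak s$ is the same as $G_{Y,K}(\xi)=\mathfrak s$, and Proposition~\ref{prop:Kunneth} yields $\widehat A_{\xi'}(Y',K')\cong\widehat A_{\xi}(Y,K)$ for every $\xi'\in\Pi_1^{-1}(\xi)$. Hence
$$\bigoplus_{\{\xi'\mid G_{Y,K}\circ\Pi_1(\xi')=\mathfrak s\}}H\big(\widehat A_{\xi'}(Y',K')\big)\;\cong\;\bigoplus^{q}\bigoplus_{\{\xi\mid G_{Y,K}(\xi)=\mathfrak s\}}H\big(\widehat A_{\xi}(Y,K)\big),$$
which is the asserted formula; the right-hand sum is finite by Lemma~\ref{lem:Ambient} because $\widehat{HF}(Y,\mathfrak s)=0$.
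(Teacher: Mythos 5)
Your argument follows the paper's proof essentially step for step: Proposition~\ref{prop:LargeSurg} applied to $K'\subset Y'$, Lemma~\ref{lem:piDecomp} to relate the index sets, Lemmas~\ref{lem:XiSurj} and \ref{lem:FinNonzero} to see that for $m$ large the image of $\Xi'$ contains every $\xi'$ with $H(\widehat A_{\xi'}(Y',K'))\ne0$, and Proposition~\ref{prop:Kunneth} together with the $q$--to--one property of $\Pi_1$ for the second isomorphism. The only divergence is the injectivity of $\Xi'$ on $\pi^{-1}(\mathfrak s)$, which you sketch via the winding-region/adjacent-intersection-point argument and rightly flag as the delicate step; the paper disposes of exactly this point by citing Lemma~2.4 of \cite{NiNSSphere}, so your instinct about where the care is needed matches where the paper leans on an external reference.
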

\begin{proof}
By Proposition~\ref{prop:LargeSurg}, when $m$ is sufficiently
large
$$\widehat{HF}(Y'_{m\mu'+\lambda'}(K'),\pi^{-1}(\mathfrak s))\cong
\bigoplus_{\mathfrak t\in\pi^{-1}(\mathfrak s
)}H(\widehat{A}_{\Xi'(\mathfrak t)}(Y',K')).$$

By Lemma~\ref{lem:piDecomp},
$$\Xi'(\pi^{-1}(\mathfrak s))=\Xi'\left(\Xi'^{-1}\circ(G_{Y,K}\circ\Pi_1)^{-1}(\mathfrak s)\right)\subset(G_{Y,K}\circ\Pi_1)^{-1}(\mathfrak s).$$
Consider the map
$$\Xi_{\mathfrak s}'\co\pi^{-1}(\mathfrak s)\to (G_{Y,K}\circ\Pi_1)^{-1}(\mathfrak s).$$
By \cite[Lemma~2.4]{NiNSSphere}, $\Xi_{\mathfrak s}'$ is
injective. Moreover, by Lemmas~\ref{lem:XiSurj} and
\ref{lem:FinNonzero}, when $m$ is sufficiently large, the range of
$\Xi_{\mathfrak s}'$ contains all
$\xi'\in(G_{Y,K}\circ\Pi_1)^{-1}(\mathfrak s)$ satisfying
$H(\widehat{A}_{\xi'}(Y',K'))\ne0$. This proves the first
equality.

In order to prove the second equality, we note that for each
$\xi\in\relspin(Y,K)$, there are exactly $q$ relative Spin$^c$
structures in $\Pi_1^{-1}(\xi)$. Moreover, by
Proposition~\ref{prop:Kunneth}, if $\xi'\in\Pi_1^{-1}(\xi)$, then
$$\widehat{A}_{\xi'}(Y',K')\cong\widehat{A}_{\xi}(Y,K).$$
So the second equality easily follows.
\end{proof}

\begin{proof}[Proof of Theorem~\ref{thm:SurgForm}]
Let $$C=\mathrm{rank}\bigoplus_{\{\xi|\:G_{Y,K}(\xi)=\mathfrak
s\}}H(\widehat{A}_{\xi}(Y,K)).$$ By
Proposition~\ref{prop:qCLarge},
$$\mathrm{rank}\:\widehat{HF}(Y_{p/q},\pi^{-1}(\mathfrak s))=qC$$
when $p$ is sufficiently large.

Since $\widehat{HF}(Y,\mathfrak s)=0$, we have
$\widehat{HF}(Y',\mathfrak s')=0$ for any $\mathfrak s'$ that
extends $\mathfrak s$. By \cite[Theorem~9.12]{OSzAnn2}, we have
the long exact sequence
$$
\begin{xymatrix}{
\widehat{HF}(Y',P_1^{-1}(\mathfrak
s))\ar[r]&\widehat{HF}(Y'_{m\mu'+\lambda'}(K'),\pi_m^{-1}(\mathfrak
s))\ar[dl]\\
\widehat{HF}(Y'_{(m+1)\mu'+\lambda'}(K'),\pi_{m+1}^{-1}(\mathfrak
s))\ar[u]&}
\end{xymatrix},
$$
where
$$P_1\co\spin(Y')\to\spin(Y),$$
$$\pi_m\co\spin(Y'_{m\mu'+\lambda'}(K'))\to\spin(Y)$$
are the natural projection maps. Since
$\widehat{HF}(Y',P_1^{-1}(\mathfrak s))=0$, we have
$$\widehat{HF}(Y'_{a\mu'+\lambda'}(K'),\pi_a^{-1}(\mathfrak s))\cong
\widehat{HF}(Y'_{m\mu'+\lambda'}(K'),\pi_m^{-1}(\mathfrak s))$$
for $m$ sufficiently large. Hence its rank is always $qC$.
\end{proof}

\section{Cosmetic surgeries}

\begin{proof}[Proof of Theorem~\ref{thm:NormCos}]
Assume there are two rational numbers
$\displaystyle\frac{p_1}{q_1},\frac{p_2}{q_2}$ satisfying that
there is a homeomorphism
$$f\co Y_{\frac{p_1}{q_1}}\to\pm Y_{\frac{p_2}{q_2}},$$ then
$|p_1|=|p_2|$ for homological reasons. If
$\displaystyle\frac{p_1}{q_1}\ne\pm\frac{p_2}{q_2}$, then we can
assume
$$0<q_1<q_2.$$

Without loss of generality, we may assume $Y-K$ is irreducible.
 By (\ref{eq:LargerNorm}) and the adjunction inequality, we
conclude that $\widehat{HF}(Y,h,\frac12x_{Y-K}(h))=0$. It then
follows from Theorem~\ref{thm:SurgForm} that there is a constant
$C_h$, such that
$$\mathrm{rank}\:\widehat{HF}(Y_{p/q}(K),h,\frac12x_{Y-K}(h))=qC_h.$$
Since (\ref{eq:LargerNorm}) holds, \cite[Corollary~2.4]{G2}
implies that
$$x_{Y-K}(h)=x_{Y_{p/q}(K)}(h)$$ for any nonzero $h\in H_2(Y)$ and $\displaystyle\frac pq\in\mathbb
Q$. Theorem~\ref{thm:UnTwistNorm} then implies that
$$\mathrm{rank}\:\widehat{HF}(Y_{p/q}(K)|h)=qC_h\ne0.$$

Since $K$ is null-homologous, the inclusion maps $Y-K\to Y_r$
induce isomorphisms on $H_2$ for each $r\in\mathbb
Q\cup\{\infty\}\backslash\{0\}$. Hence we can identify
$H_2(Y_r(K))$ with $H_2(Y)$. Now $f_*\co
H_2(Y_{\frac{p_1}{q_1}})\to H_2(Y_{\frac{p_2}{q_2}})$ can be
regarded as a map $$f_*\co H_2(Y)\to H_2(Y).$$ Fix a nonzero $h\in
H_2(Y)$, we have
$$
\mathrm{rank}\:\widehat{HF}(Y_{\frac{p_1}{q_1}}|f^n_*(h))=
\frac{q_1}{q_2}\mathrm{rank}\:\widehat{HF}(Y_{\frac{p_2}{q_2}}|f^n_*(h))\ne0
$$
for any $n\in\mathbb Z$. Moreover, since $f\co
Y_{\frac{p_1}{q_1}}\to\pm Y_{\frac{p_2}{q_2}}$ is a homeomorphism,
we have
$$
\mathrm{rank}\:\widehat{HF}(Y_{\frac{p_1}{q_1}}|f^{n-1}_*(h))=
\mathrm{rank}\:\widehat{HF}(Y_{\frac{p_2}{q_2}}|f^{n}_*(h)).
$$
Thus we get
$$\mathrm{rank}\:\widehat{HF}(Y_{\frac{p_1}{q_1}}|f^n_*(h))=\left(\frac{q_1}{q_2}\right)^n
\mathrm{rank}\:\widehat{HF}(Y_{\frac{p_1}{q_1}}|h)\ne0.$$ So
$0<\mathrm{rank}\:\widehat{HF}(Y_{\frac{p_1}{q_1}}|h)<1$
 when $n$ is sufficiently large, which is impossible.
\end{proof}

\begin{proof}[Proof of Theorem~\ref{thm:ZeroNormCos}]
Since $x_Y\equiv0$, the adjunction inequality implies that
$\widehat{HF}(Y,h,\frac12x_{Y-K}(h))=0$ for any $h\in H_2(Y)$
satisfying $x_{Y-K}(h)\ne0$. Using Theorems~\ref{thm:SurgForm},
\ref{thm:UnTwistNorm} and \cite[Corollary~2.4]{G2}, we have
$$\mathrm{rank}\:\widehat{HF}(Y_{p/q}(K)|h)=qC_h$$ for some
nonzero constant $C_h$. Now the argument is the same as in the
proof of Theorem~\ref{thm:NormCos}.
\end{proof}

\end{document}